\def\Com{\UseComputerModernTips}
\newcommand{\mf}{\mathfrak}
\newcommand{\ra}{\rightarrow}
\newcommand{\Ra}{\Rightarrow}
\newcommand{\mbb}{\mathbb}
\newcommand{\tn}{\textnormal}
\newtheorem{de}{Definition}[section]
\newtheorem{re}[de]{Remark}
\newtheorem{pr}[de]{Proposition} 
\newtheorem{tr}[de]{Theorem}
\newtheorem{lm}[de]{Lemma}
\newtheorem{co}[de]{Corollary}
\def\vp{\rm \vspace{0.2cm}}
\def\GL{\rm GL}
\def\SL{\rm SL}
\def\EO{\rm EO}
\def\SO{\rm SO}
\def\E{\rm E}
\def\T{\rm T}
\def\ET{\rm ET}
\def\G{\rm G}
\def\Sp{\rm Sp}
\def\I{\rm I}
\def\T{\rm T}
\def\O{\rm O}
\def\ESp{\rm ESp}
\def\EO{\rm EO}
\def\es{\rm S}
\begin{document}
\title{
Quillen--Suslin Theory for Classical 
Groups: Revisited over Graded Rings}
\author{Rabeya Basu}
\thanks{Corresponding Author: Rabeya Basu -- rabeya.basu@gmail.com}
\author{Manish Kumar Singh}

\date{}

\maketitle

{\small \noindent
{\it 2010 Mathematics Subject Classification:\\
{11E57, 13A02, 13B25, 13B99,  13C10, 13C99, 15B99.}}}\vp\\
{\small {\it Key words: Bilinear forms, Symplectic and 
Orthogonal forms, Graded rings.}} \vp 

\noindent
{\small Abstract:  In this paper we deduce a graded version of Quillen--Suslin's Local-Global Principle for the traditional classical groups, {\it viz.} general linear, 
symplectic and orthogonal groups and establish its equivalence of the normality property of the respective elementary subgroups. This  
generalizes previous result of Basu--Rao--Khanna; {\it cf.} \cite{rrr}. Then, as an application, 
we establish an analogue Local-Global Principle for the commutator subgroups of the special linear and symplectic groups. 
Finally, by using Swan-Weibel's homotopy trick, we establish 
graded analogue of the Local-Global Principle 
for the transvection subgroups of the full automorphism groups of the linear, symplectic and orthogonal modules. This generalizes 
the previous result of Bak--Basu--Rao; {\it cf.} \cite{BBR}.}

\section{Introduction} 

In \cite{rrr}, the first author with Ravi A. Rao and Reema Khanna revisited Quillen-Suslin's Local-Global Principle 
for the linear, symplectic and orthogonal groups to show the equivalence of the 
Local-Global Principle and Normality property of their Elementary subgroups. A ``relative version'' 
is also established recently by the same authors; {\it cf. \cite{RB2}}. In \cite{rrr}, the authors have also deduced a Local-Global Principle for 
the commutator subgroups of the special linear groups, and symplectic groups. In this article we aim to revisit those results over commutative $\mbb{N}$-graded
rings with identity. We follow the line of proof described in \cite{rrr}.


For the linear case, the graded version of the Local-Global Principle was studied by Chouinard in \cite{C}, and by J. Gubeladze in \cite{gubel}, \cite{gubel2}. 
Though the analogue results are expected for the symplectic and orthogonal groups, according to our best knowledge, 
that is not written explicitly in any existing literatures. By deducing the equivalence, we establish the graded version of the 
Local-Global Principle for the above two types of classical groups. 

To generalize the existing results, from the polynomial rings to the graded rings, one has to use the line of proof of Quillen--Suslin's patching argument 
({\it cf. \cite{QUI}, \cite{SUS}}), 
and Swan--Weibel's homotopy trick. 
For a nice exposition we refer to \cite{gubel} by J. Gubeladze. 

Though, we are writing this article for commutative graded rings, one may also 
consider standard graded algebras which are finite over its center. For a commutative $\mbb{N}$-graded ring with 1, 
we establish:

{\bf $(1)$ (Theorem \ref{thm 22}}) Normality of the elementary subgroups is equivalent to the graded 
Local-Global Principle for the linear, symplectic and orthogonal groups. 

{\bf $(2)$ (Theorem \ref{thm 21})} Analogue of Quillen--Suslin's Local-Global 
Principle for the commutator subgroups of the special linear and symplectic groups over graded rings.

In \cite{BBR}, the following was established by the first author with 
A. Bak and R.A. Rao: 

{\bf (Local-Global Principle for the Transvection Subgroups})
An analogue
of Quillen--Suslin's Local-Global Principle for the transvection subgroup of 
the automorphism group of projective, 
symplectic and orthogonal modules of global rank at least 1 and 
local rank at least 3, under the assumption that the projective module has 
constant local rank and that the symplectic and orthogonal modules are locally 
an orthogonal sum of a constant number of hyperbolic planes. 

In this article, we observe that by using Swan-Weibel's homotopy trick,
one gets an analogue statement for the graded case. More precisely, we deduce the following fact:

{\bf $(3)$ (Theorem \ref{thm 51})} \label{tr 31} Let $A = \bigoplus_{i = 0 }^{\infty} A_i$ be a graded ring and $Q \simeq P\oplus A_0$ be a 
projective $A_0$-module ($Q \simeq P \oplus\mathbb{H}(A_0)$ for symplectic and orthogonal modules). If an automorphism of $Q \otimes_{A_0} A$ 
is locally in the transvection subgroup, then it is globally in the transvection subgroup.

%
%

\section{Definitions and Notations}

Let us start by recalling the following well-known
fact: Given a ring $R$ and a subring $R_0$, one can express $R$ as a
direct limit of  subrings which are finitely generated over $R_0$ as
rings. Considering $R_0$ to be the minimal subring ({\it i.e.} the
image of $\mbb{Z}$), it  follows that every ring is a direct limit of
Noetherian rings. Hence we may consider $R$ to be Noetherian ({\it cf.} Pg 271 \cite{MM}).

Throughout this paper we assume $A$ to be a Noetherian, commutative graded ring with identity $1$. We shall write 
$A = A_0\oplus A_1 \oplus A_2 \oplus \cdots$. As we know the multiplication in a graded ring satisfies the following property: 
For all $i, j$, $A_i A_j \subset A_{i+j}$. An element $a \in A$ will be denoted by $a = a_0 + a_1 + a_2 + \cdots $, where $a_i\in A_i$ for each $i$, and 
and all but finitely many $a_i$ are zero. Let $A_+= A_1 \oplus A_2 \oplus \cdots$. 
Graded structure of $A$ induces a graded structure on ${\rm M}_n(A)$ (ring of $n \times n$ matrices). 
Let $S$ be a multiplicatively closed subset of $A_0$. 
Then  for a non-zero divisor $s \in S$ we shall denote the 
localization of a matrix $\alpha \in {\rm M}_n(A)$ to ${\rm M}_n(A_s)$ as $\alpha_s$. 
Otherwise, $\alpha_i,\; i \in \mathbb{Z}$ will represent the $i$-th component of $\alpha$. 
We shall use standard notations ${\GL}_n(A)$ and ${\SL}_n(A)$ to denote the  group of invertible matrices and its subgroup of all 
invertible matrices with determinant $1$ respectively.

We recall the well-known ``{\bf Swan-Weibel's homotopy trick}'', which is the main ingredient to handle the graded case.  

\begin{de}
Let $a \in A_0$ be a fixed element. We fix an element $b = b_0 + b_1 + \cdots$ in $A$ and define a ring homomorphism 
$\epsilon: A \rightarrow A[X]$ given by \[ \epsilon(b) = \epsilon (b_0 + b_1 + \cdots )\; = \; b_0 + b_1X + b_2X^2 + \cdots + b_iX^i + \cdots .\]

Then we evaluate the polynomial $\epsilon(b)(X)$ at $X = a$ and denote the image by $b^+(a)$ {\it i.e.} $b^+(a) = \epsilon(b)(a)$.
Note that $\big(b^+(x)\big)^+(y) = b^+(xy)$. Observe, $b_0=b^{+}(0)$. 
We shall use this fact frequently.
\end{de}

The above ring homomorphism $\epsilon$ induces a group homomorphism at the ${\GL}_n(A)$ level for every $n \geq 1$, {\it i.e.} for 
$\alpha \in {\GL}_n(A)$ we get a map $$\epsilon: {\GL}_n(A) \rightarrow {\GL}_n(A[X]) \text{ defined by} $$ 
$$\alpha = \alpha_0 \oplus \alpha_1 \oplus \alpha_2 \oplus \cdots \mapsto \alpha_0 \oplus \alpha_1X \oplus \alpha_2X^2 \cdots,$$
where $\alpha_i\in {\GL}_n(A_i)$. 
As above for $a \in A_0$, we define $\alpha^+(a)$ as $$\alpha^+(a) = \epsilon(\alpha)(a).$$

Now we are going to recall the definitions of the traditional classical groups, {\it viz.} the general linear groups, the symplectic and orthogonal groups 
(of even size) and their 
type subgroups, {\it viz.} elementary (symplectic and orthogonal elementary resp.) subgroups.

Let $e_{ij}$ be the matrix with $1$ in the $ij$-position and $0$'s elsewhere.
The matrices of the form $\{ {\E}_{ij}(\lambda) : \lambda \in A \mid i \neq j\},$  where  $$ {\E}_{ij} (\lambda) = {\I}_n + \lambda e_{ij}$$  
 are called the \textbf{elementary generators}.

\begin{de}
The subgroup generated by the set $\{ {\E}_{ij}(\lambda) : \lambda \in A \mid i \neq j\}$, is called the 
\textbf{elementary subgroup of the general linear group} and is denoted by ${\E}_n(A)$. 
Observe that ${\E}_n(A) \subseteq {\SL}_n(A)\subseteq {\GL}_n(A)$. 
\end{de}

Let $\sigma$ be a permutation defined by: For $i \in \{1, \ldots, 2m \}$
$$\sigma(2i) = 2i -1 \text{ and }\sigma(2i-1) = 2i.$$
With respect to this permutation, we define two $2m \times 2m$ forms (viz.) $\psi_m$ and $\widetilde{\psi}_m$  as follows: For $m>1$, let 
\[ \psi_1 = 
\begin{bmatrix}
    0  &  1      \\
    -1  &  0      
\end{bmatrix}
\text{ and for $m>1$},\,\, \psi_m =
\begin{bmatrix}
    \psi_{m-1}  &  0      \\
    0  &  {\I}_2     
\end{bmatrix},
\]

\[ \widetilde{\psi}_1 = 
\begin{bmatrix}
    0  &  1      \\
    1  &  0      
\end{bmatrix}
\text{ and for $m>1$}, \,\,\widetilde{\psi}_m =
\begin{bmatrix}
    \widetilde{\psi}_{m-1}  &  0      \\
    0  &  {\I}_2     
\end{bmatrix}. 
\]

Using above two forms, we define the following traditional classical groups:

\begin{de} A matrix $\alpha\in {\GL}_{2m}(A)$ is called symplectic if it fixes $\psi_m$ under the action of conjugation, {\it i.e.}
$$ \alpha^t \psi_{m} \alpha = \psi_{m}.$$
The group generated by all symplectic matrices is called the \textbf{symplectic group} and is denoted by ${\Sp}_{2m}(A)={\Sp}_n(A)$, where $n = 2m$. 
\end{de}

\begin{de} A matrix $\alpha \in {\GL}_{2m}(A)$ is called orthogonal if it fixes $\widetilde{\psi}_m$ under the action of conjugation, {\it i.e.}
$$ \alpha^t \widetilde{\psi}_{m} \alpha = \widetilde{\psi}_{m}.$$
The group generated by all orthogonal matrices is called \textbf{orthogonal group} and is denoted by ${\O}_{2m}(A)={\O}_{n}(A)$, where $n = 2m$.
\end{de}
\begin{de}
The matrices of the form $$\{{se}_{ij}(z)\in {\GL}_{2m}(A) : z \in A \mid i \neq j\},$$  where $$se_{ij}(z) = {\I}_{2m} + ze_{ij} \text{ if } i = \sigma(j)$$ or
$$se_{ij}(z) = {\I}_{2m} + ze_{ij} - (-1)^{i+j}ze_{\sigma(j) \sigma(i)} \text{ if } i \neq \sigma(j) \text{ and } i < j.$$    
are called \textbf{ symplectic elementary generators}.
The subgroup generated by symplectic elementary generators is called \textbf{symplectic elementary group}.
\end{de}
\begin{de}
The matrices of the form $\{{oe}_{ij}(z) : z \in A \mid i \neq j\},$ where 
$$oe_{ij}(z) = {\I}_{2m} + ze_{ij} - ze_{\sigma(j) \sigma(i)}, \text{ if } i \neq \sigma(j) \text{ and } i < j.$$    
are called \textbf{orthogonal elementary generators}.

The subgroup generated by orthogonal elementary generators is called \textbf{orthogonal elementary group}.

\begin{re} \tn{It is a well known fact that the elementary subgroups are normal subgroups of the respective classical groups; for $n\ge 3$ in the 
linear case, $n=2m\ge 4$ in the symplectic case, and $n=2m\ge 6$ in the orthogonal case. 
The linear case was due to A. Suslin ({\it cf.}  \cite{Tu}), symplectic was proved by V. Kopeiko ({\it cf.} \cite{Tu}). 
Finally the orthogonal case was proved by Suslin--Kopeiko ({\it cf.}  \cite{SUSK}).}
\end{re}

\end{de} We use the following notations
to treat above three groups uniformly: 
 $${\G}( n, A) \text{ will denote } {\GL}_n(A) \text{ or }  {\Sp}_{2m}(A) \text{ or }{\O}_{2m}(A).$$ 
$${\E}(n, A) \text{ will denote }  {\E}_n(A)  \text{ or } {\EO}_{2m}(A) \text{ or } 
{\ESp}_{2m}(A).$$ 
$${\rm S}(n, A) \text{ will denote } {\SL}_n(A) \text{ or } {\SO}_{2m}(A), \text{ and }$$ 
$$ ge_{ij} \text{ will denote the elementary generators of } {\E}_n(A) \text{ or } {\ESp}_n(A) \text {or } {\EO}_n(A).$$

Here ${\SO}_{n}(A)$ is the subgroup of ${\O}_n(A)$ with determinant $1$. Throughout the paper we will assume $n = 2m$ 
to treat the non-linear cases. We shall assume $n\ge 3$ while treating the linear case, and 
$n\ge 6$; {\it i.e.} $m\ge 3$ while treating the symplectic and orthogonal cases. 
Let \[ \widetilde{v} =
\begin{cases}
v \psi_m  \text{ in the symplectic case, and } \\
v \widetilde{\psi}_m \text{ in the orthogonal case. }
\end{cases}
\]
\begin{de}
Let $v, w \in A^n$ be vectors of length $n$, then we define the inner product $\langle v, w \rangle $ as follows:
\begin{enumerate}
\item $\langle v, w \rangle $  = $v^tw$ in the linear case,
\item $\langle v, w \rangle $  = $\widetilde{v} w$ in the symplectic and orthogonal cases.
\end{enumerate}
\end{de}

\begin{de}Let $v, w \in A$, we define the map $M: A^n \times A^n \rightarrow M_n(A)$ as follows:
\begin{enumerate}
\item $M(v,w) = vw^t$ in the linear case,
\item $M(v,w) = v\widetilde{w} + w\widetilde{v}$ in the symplectic case,
\item $M(v,w) = v\widetilde{w} - w\widetilde{v}$ in the orthogonal case.
\end{enumerate}
\end{de}

By $[G, G]$, we denote the commutator subgroup of $G$ {\it i.e.} group generated by elements of the form $ghg^{-1}h^{-1}$, for $g,h\in G$.

\begin{de} A row $(v_1, \ldots, v_n) \in A^n$ is called \textbf{unimodular row of length $n$} 
if there exist $b_1,\ldots,b_n \in A$ such that 
$\Pi_{i=1}^n v_ib_i = 1$; {\it i.e.} If $I$ is the ideal generated by 
$\langle v_1, \ldots, v_n \rangle$, then $(v_1, \ldots, v_n)$ is unimodular 
if and only if $I = A$. In the above case we say $A$ has $n$-cover.
\end{de}

\section{Local-Global Principle for classical groups}

Before discussing the main theorem we recall following standard facts.

\begin{lm}[Splitting Lemma] \label{lemm 16} The elementary generators of $n \times n$ 
matrices satisfy the following property: 
$$ge_{ij}(x + y) = ge_{ij}(x)\; ge_{ij}(y)$$ for all $x, y \in A$ 
and for all $i, j =1,\ldots,n$ with $i\ne j$.
\end{lm}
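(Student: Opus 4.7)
The plan is to verify the identity by a single matrix computation in each of the three cases, reducing everything to the nilpotency of the ``off-diagonal part'' of the corresponding elementary generator.

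First I would write each elementary generator uniformly as
\[
 ge_{ij}(z) \;=\; {\I}_n + z\,N_{ij},
\]
where, according to the three definitions,
\[
 N_{ij}=\begin{cases} e_{ij} & \text{(linear case)},\\[1mm]
 e_{ij} & \text{(symplectic case, } i=\sigma(j)\text{)},\\[1mm]
 e_{ij}-(-1)^{i+j}e_{\sigma(j)\sigma(i)} & \text{(symplectic case, } i\ne\sigma(j),\ i<j\text{)},\\[1mm]
 e_{ij}-e_{\sigma(j)\sigma(i)} & \text{(orthogonal case, } i<j\text{)}.
 \end{cases}
\]
Since $A$ is commutative, $(x N_{ij})(y N_{ij})=xy\,N_{ij}^2$, and a direct expansion yields
\[
 ge_{ij}(x)\,ge_{ij}(y) \;=\; {\I}_n + (x+y)N_{ij} + xy\,N_{ij}^{2}.
\]
Because $ge_{ij}(x+y)={\I}_n+(x+y)N_{ij}$, the whole lemma boils down to proving $N_{ij}^{2}=0$ in every case.

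For the nilpotency I would invoke only the standard matrix-unit rule $e_{ab}e_{cd}=\delta_{bc}e_{ad}$ together with the fact that $\sigma$ is a fixed-point-free involution of $\{1,\dots,2m\}$ (since it swaps $2k-1\leftrightarrow 2k$). In the two ``one-term'' cases we immediately get $N_{ij}^2=e_{ij}e_{ij}=0$ from $i\ne j$. In the two ``two-term'' cases, expand
\[
 N_{ij}^{2} \;=\; e_{ij}e_{ij} \;\pm\; \bigl(e_{ij}\,e_{\sigma(j)\sigma(i)} + e_{\sigma(j)\sigma(i)}\,e_{ij}\bigr) \;+\; e_{\sigma(j)\sigma(i)}\,e_{\sigma(j)\sigma(i)} .
\]
The first and the last terms vanish because $i\ne j$ forces $\sigma(i)\ne\sigma(j)$; the cross terms vanish because $e_{ij}e_{\sigma(j)\sigma(i)}$ is nonzero only when $j=\sigma(j)$, and $e_{\sigma(j)\sigma(i)}e_{ij}$ only when $\sigma(i)=i$, both impossible since $\sigma$ has no fixed points. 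Hence $N_{ij}^{2}=0$ in every case, which completes the proof.

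There is no serious obstacle here; the only thing to watch is bookkeeping of the index constraints $i\ne j$, $i<j$, and $i\ne\sigma(j)$ that appear in the definitions of the symplectic and orthogonal generators. These conditions are exactly what ensure the cross terms above involve indices that cannot coincide, so the calculation goes through uniformly and the sign $(-1)^{i+j}$ in the symplectic case never plays a role (it only affects the coefficient, not the vanishing).
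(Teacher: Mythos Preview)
Your argument is correct: writing each generator as ${\I}_n+zN_{ij}$ and checking $N_{ij}^2=0$ via the matrix-unit rule and the fact that $\sigma$ is a fixed-point-free involution is exactly the standard verification. The paper does not actually carry out a proof here---it simply labels the lemma ``Standard'' and cites \cite{rrr}, \S3, Lemma~3.2---so your direct computation supplies precisely the details the authors omit, and there is nothing substantively different to compare.
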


\begin{proof} Standard. 
({\it cf.}  \cite{rrr}, \$3, Lemma $3.2$).
\end{proof}

\begin{lm} \label{adjust} 
 Let $G$ be a group, and $a_i, b_i\in G$ for $i=1,\ldots,r$. 
 Let $J_k = \underset{j=1}{\overset{k}\Pi} a_j$.
 Then $$\underset{i=1}{\overset{r}{\huge\Pi}} a_ib_i = \underset{i=1}{\overset{r}\Pi} J_ib_iJ_i^{-1} 
 \underset{i=1}{\overset{r}\Pi} a_i.$$ 
\end{lm}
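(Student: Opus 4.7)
The plan is to prove this by straightforward induction on $r$, the key idea being that each $a_i$ can be ``pushed'' to the right past all subsequent $b_j$'s at the cost of conjugating the intervening factors. The conjugation that appears is exactly by $J_i = a_1 a_2 \cdots a_i$, since that is the partial product of $a$'s standing to the left of $b_i$ once we begin to separate the $a$'s from the $b$'s.

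For the base case $r=1$, we have $J_1 = a_1$, so the right-hand side reads $(a_1 b_1 a_1^{-1}) a_1 = a_1 b_1$, which matches the left-hand side.

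For the inductive step, assume the identity has been established for $r-1$. I would write
\[
\prod_{i=1}^{r} a_i b_i \;=\; \Bigl(\prod_{i=1}^{r-1} a_i b_i\Bigr)\, a_r b_r \;=\; \Bigl(\prod_{i=1}^{r-1} J_i b_i J_i^{-1}\Bigr)\, J_{r-1}\, a_r b_r,
\]
using the induction hypothesis together with $\prod_{i=1}^{r-1} a_i = J_{r-1}$. The core manipulation is then the single identity
\[
J_{r-1}\, a_r b_r \;=\; \bigl(J_{r-1} a_r\bigr)\, b_r \,\bigl(J_{r-1}a_r\bigr)^{-1} \cdot \bigl(J_{r-1} a_r\bigr) \;=\; \bigl(J_r b_r J_r^{-1}\bigr)\, J_r,
\]
which inserts the conjugate for the final index and leaves the correct tail $J_r = \prod_{i=1}^{r} a_i$. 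Substituting this into the display above yields exactly $\prod_{i=1}^{r} J_i b_i J_i^{-1} \cdot \prod_{i=1}^{r} a_i$, closing the induction.

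There is no real conceptual obstacle here; the lemma is essentially a bookkeeping identity. The only care needed is to track the indices $J_{r-1}$ versus $J_r$ correctly when carrying out the commutation in the inductive step, and to make sure the partial-product notation is used consistently with the convention $J_0 = e$ implicit at the base case.
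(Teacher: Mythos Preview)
Your induction argument is correct and carefully written. The paper itself does not actually prove this lemma but merely cites \cite{rrr}, \S3, Lemma~3.4; your direct induction is the standard way to establish such a rearrangement identity and is almost certainly what the cited source does.
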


\begin{proof}  {\it cf.} \cite{rrr}, \$3, Lemma $3.4.$

\end{proof}

Following structural lemma pays a key role in the proof of main theorem. 
It is well-known for the polynomial rings; {\it cf.} \cite{rrr} (\$3, Lemma $3.6$).
Here, we deduce the analogue for the graded rings. 
\begin{lm}\label{lemm 11} Let ${\G}(n, A, A_+)$ denote the subgroup of ${\G}(n,A)$ which 
is equal to ${\I}_n$ modulo $A_+$. 
Then the group ${\G}(n, A, A_+) \cap {\E}(n, A)$ is generated by the elements of the type 
$\epsilon\, ge_{ij}(A_+)\,\epsilon^{-1}$ for some $\epsilon \in {\E}(n,A_0)$.
\end{lm}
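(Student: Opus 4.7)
The plan is to take any $\alpha\in{\G}(n,A,A_+)\cap{\E}(n,A)$, write it as a product of elementary generators with parameters in $A$, split each parameter into its degree-$0$ and positive-degree parts, and then rearrange via Lemma~\ref{adjust} so that the degree-$0$ pieces collect at the right and collapse to the identity.

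First I would use the definition of ${\E}(n,A)$ to write
$$\alpha \;=\; \prod_{k=1}^{r} ge_{i_k j_k}(x_k),\qquad x_k\in A.$$
For each $k$, decompose $x_k = (x_k)_0 + (x_k)_+$ with $(x_k)_0\in A_0$ and $(x_k)_+\in A_+$. By the Splitting Lemma (Lemma~\ref{lemm 16}), each factor splits as
$$ge_{i_k j_k}(x_k) \;=\; ge_{i_k j_k}\bigl((x_k)_0\bigr)\cdot ge_{i_k j_k}\bigl((x_k)_+\bigr) \;=\; a_k\,b_k,$$
where $a_k\in{\E}(n,A_0)$ and $b_k = ge_{i_k j_k}((x_k)_+)$ is an elementary generator with parameter in $A_+$.

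Next I would apply Lemma~\ref{adjust} with $J_i=\prod_{j=1}^{i} a_j\in{\E}(n,A_0)$:
$$\alpha \;=\; \prod_{k=1}^{r} a_k b_k \;=\; \Bigl(\prod_{k=1}^{r} J_k\, b_k\, J_k^{-1}\Bigr)\Bigl(\prod_{k=1}^{r} a_k\Bigr).$$
Each factor $J_k b_k J_k^{-1}$ is precisely of the required form $\epsilon\, ge_{ij}(A_+)\,\epsilon^{-1}$ with $\epsilon=J_k\in{\E}(n,A_0)$. It remains to show the leftover factor $\beta := \prod_{k=1}^{r} a_k$ equals ${\I}_n$.

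For this I would use the graded reduction modulo $A_+$. On one hand $\alpha\equiv{\I}_n\pmod{A_+}$ by hypothesis. On the other hand each $b_k\equiv{\I}_n\pmod{A_+}$, hence so does each conjugate $J_k b_k J_k^{-1}$, and therefore their product does too. Consequently $\beta\equiv{\I}_n\pmod{A_+}$. But $\beta\in{\rm M}_n(A_0)$, and the projection $A\twoheadrightarrow A/A_+\cong A_0$ is the identity on $A_0$, so $\beta={\I}_n$ in ${\rm M}_n(A_0)$. This gives the desired factorization of $\alpha$.

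The argument is essentially the graded analogue of the polynomial-ring version in \cite{rrr}, and the only place where something could conceivably go wrong is the last reduction step; the main obstacle is therefore to make sure that in the symplectic and orthogonal cases the generators $se_{ij}$ and $oe_{ij}$ also satisfy a clean splitting of the form $ge_{ij}(x+y)=ge_{ij}(x)ge_{ij}(y)$ (which is exactly the content of Lemma~\ref{lemm 16}), so that the decomposition $x_k=(x_k)_0+(x_k)_+$ really does split the generator into an $A_0$-part and an $A_+$-part within ${\E}(n,A)$. Once that is in hand, the rearrangement via Lemma~\ref{adjust} and the modulo-$A_+$ reduction conclude the proof uniformly in the three cases.
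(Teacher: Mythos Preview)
Your proof is correct and follows essentially the same route as the paper's: both write $\alpha$ as a product of elementary generators, split each parameter into its $A_0$- and $A_+$-parts via Lemma~\ref{lemm 16}, rearrange via Lemma~\ref{adjust} to collect the $A_0$-factors on the right, and then kill that rightmost product by reducing modulo $A_+$. Your $J_k$ and $\beta$ are exactly the paper's $\epsilon_k$ and $\mathcal{B}$, and your closing remark about the non-linear cases is precisely what Lemma~\ref{lemm 16} supplies.
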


\begin{proof}
Let $\alpha \in {\E}(n, A) \cap {\G}(n, A, A_+)$. Then  we can write 
$$\alpha = \underset{k=1}{\overset{r}\Pi} ge_{i_kj_k}(a_k)$$ 
for some elements $a_k \in A$, $k=1,\ldots,r$. 
As $a_k = ({a_0})_k + ({a_+})_k$ for some $(a_0)_k\in A_0$ and $(a_{+})_k\in A_{+}$. Using the splitting lemma (Lemma \ref{lemm 16}) we can rewrite the expression as:
$$\alpha =  \underset{k=1}{\overset{r}\Pi}\big(ge_{i_k j_k}{(a_0)}_k\big) \, \big(ge_{i_k j_k}{(a_+)}_k\big).$$
We write $\epsilon_t = \Pi_{k=1}^{t}ge_{i_k j_k}\big({(a_0)}_k\big)$ for $t \in\{1,2,\ldots,r\}$. Observe that $\epsilon_r = {\I}_n$, as $\alpha \in {\G}(n, A, A_+)$.
Then 
$$\alpha = \Big(\underset{k=1}{\overset{r}\Pi} \epsilon_k ge_{i_k j_k}\big({(a_+)}_k\big) 
\epsilon_k^{-1}\Big) \Big(\underset{k=1}{\overset{r}\Pi}ge_{i_k j_k}\big({(a_0)}_k\big)\Big) = 
\mathcal{A}\mathcal{B} \text{ (say) },$$ 
where $\mathcal{A} = \Big(\underset{k=1}{\overset{r}\Pi} \epsilon_k ge_{i_k, j_k}\big({(a_+)}_k\big) \epsilon_k^{-1}\Big) $ 
and $\mathcal{B} = \Big(\underset{k=1}{\overset{r}\Pi}ge_{i_k j_k}\big({(a_0)}_k\big)\Big). $ 
Now go modulo $A_+$. Let bar (--) denote the quotient ring modulo $A_{+}$. Then, 
$$\overline{\alpha} = \overline{{\I}}_n = 
\overline{\mathcal{A}} \overline{\mathcal{B}} = \overline{{\I}}_n \overline{\mathcal{B}} =
\bar{{\I}}_n \implies \overline{\mathcal{B}} = \overline{{\I}}_n,$$ 
as $\alpha \in {\G}(n, A, A_+)$. Since entries of $\mathcal{B}$ 
are in $A_0$, it follows that $\mathcal{B} = {\I}_n$. 
$$\alpha =\underset{k=1}{\overset{r}\Pi} \epsilon_k \,\big(ge_{i_k j_k}\big({(a_+)}_k\big)\big) \,\epsilon_k^{-1};$$ 
as desired.
\end{proof}

Now we prove a variant of ``Dilation Lemma'' mentioned in the statement of $(3)$ of Theorem \ref{thm 22}.
\begin{lm} \label{lemm 12} Assume the ``Dilation Lemma'' $($Theorem  {\rm \ref{thm 22} - $($\ref{Prop 13}$)$}$)$ 
to be true. Let  $\alpha_s \in {\E}(n, A_s),\;$ with $\alpha^+(0) = {{\I}_n}$. Then one gets  $$ \alpha^+(b + d)\alpha^+(d)^{-1} \in {\E}(n, A)$$
for some $s, d\in A_0$ and $b = s^l, l \gg 0$. 
\end{lm}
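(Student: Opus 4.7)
The plan is to invoke Swan--Weibel's homotopy trick to promote the assertion to the polynomial ring $A[X]$ and then reduce to the polynomial-ring Dilation Lemma (Theorem \ref{thm 22} - (\ref{Prop 13})).

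The essential preliminary is that the map $\alpha \mapsto \alpha^+(X)$ is multiplicative: for $\alpha, \beta \in {\G}(n, A)$ one has $(\alpha\beta)^+(X) = \alpha^+(X)\beta^+(X)$, since the degree-$k$ graded component of $\alpha\beta$ equals $\sum_{i+j=k}\alpha_i \beta_j$, which is exactly the coefficient of $X^k$ in $\alpha^+(X)\beta^+(X)$. Consequently $(\alpha^{-1})^+(X) = (\alpha^+(X))^{-1}$, so $\alpha^+(X) \in {\G}(n, A[X])$ for every $\alpha \in {\G}(n, A)$; and on elementary generators $ge_{ij}(\lambda)^+(X) = ge_{ij}(\lambda^+(X))$. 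Writing $\alpha_s = \prod_k ge_{i_k j_k}(a_k)$ with $a_k \in A_s$, one obtains $\alpha_s^+(X) = \prod_k ge_{i_k j_k}(a_k^+(X)) \in {\E}(n, A_s[X])$, and the coefficient-by-coefficient identification $(\alpha^+(X))_s = \alpha_s^+(X)$ is immediate because $s \in A_0$ respects the grading.

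Next I would introduce $\gamma(X) = \alpha^+(X+d)\alpha^+(d)^{-1} \in {\G}(n, A[X])$. Since $d \in A_0$, substitution of $X+d$ keeps everything inside $A[X]$, and $\gamma(0) = {\I}_n$ by construction; the hypothesis $\alpha^+(0)={\I}_n$ ensures that the $d=0$ case of the argument reduces to the familiar form $\gamma(X) = \alpha^+(X)$. Localizing at $s$ and substituting $X+d$ for $X$ in the elementary factorization of $\alpha_s^+(X)$ produces a fresh elementary factorization of $\gamma(X)_s$, so $\gamma(X)_s \in {\E}(n, A_s[X])$. Thus $\gamma(X)$ meets the hypotheses of the polynomial Dilation Lemma, and by Theorem \ref{thm 22} - (\ref{Prop 13}) there exists $l \gg 0$ with $\gamma(s^l X) \in {\E}(n, A[X])$. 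Specializing at $X = 1$ yields $\gamma(s^l) = \alpha^+(s^l + d)\alpha^+(d)^{-1} \in {\E}(n, A)$, which is the desired conclusion with $b = s^l$. The only non-routine step is verifying the compatibility of the homotopy operation $(\cdot)^+$ with localization and with the elementary subgroup; this is bookkeeping with the grading, but it is exactly the mechanism that converts a graded-ring problem into a polynomial-ring problem, after which the reduction to the known Dilation Lemma is mechanical.
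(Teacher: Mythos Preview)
Your proof is correct and follows essentially the same route as the paper's: define $\gamma(X)=\alpha^+(X+d)\,\alpha^+(d)^{-1}$, observe $\gamma(0)={\I}_n$ and $\gamma(X)_s\in{\E}(n,A_s[X])$, apply the Dilation Lemma (Theorem~\ref{thm 22}--(\ref{Prop 13})) over the polynomial ring to obtain an element of ${\E}(n,A[X])$ localizing to $\gamma_s(bX)$, and then specialize at $X=1$. You supply more detail on the multiplicativity of $(\cdot)^+$ and its compatibility with localization, which the paper simply takes for granted.
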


\begin{proof} We have $\alpha_s \in {\E}(n, A_s)$.
Hence $\alpha_s^+(X) \in {\E}(n, A_s[X])$. Let $\beta^{+}(X) = \alpha^+(X + d){\alpha^+(d)}^{-1}$, where $d\in A_0$. 
Then $\beta^{+}_s(X) \in {\E}(n, A_s[X])$ and $\beta^+(0) = {{\I}_n}$.
Hence by Theorem \ref{thm 22} - (\ref{Prop 13}) their exists 
$\widetilde{\beta}(X) \in {\E}(n, A[X])$ such that $\widetilde{\beta}_s(X) = \beta_s^+(bX)$. Putting $X = 1$, we get the required result.
\end{proof}

Following is a very crucial result we need for our method of proof. There are many places we use this fact in a very subtle way. 
In particular, we mainly use this lemma for the step $(4)\Ra(3)$ of \ref{thm 22}. 

\begin{lm} {\rm ({\it cf.}\cite{HV}, Lemma 5.1)} \label{lemm 13}  Let $R$ be a Noetherian ring and $s\in R$.
Then there exists a natural number  $k$ such that the homomorphism
${\G}(n,s^kR) \ra {\G}(n, R_s)$  $($induced by localization
homomorphism $R \ra R_s)$ is injective.  Moreover, it follows that
the induced map  ${\E}(n,R,s^kR) \ra {\E}(n,R_s)$ is
injective. 
\end{lm}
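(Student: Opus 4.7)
The plan is to exploit Noetherianness of $R$ to control the $s$-torsion sitting inside the kernel of the localization map $R \to R_s$. Recall that an element $r \in R$ maps to zero in $R_s$ if and only if $s^m r = 0$ for some $m \geq 0$. First I would consider the ascending chain of annihilator ideals
\[ \text{Ann}_R(s) \subseteq \text{Ann}_R(s^2) \subseteq \text{Ann}_R(s^3) \subseteq \cdots, \]
which by Noetherianness stabilizes at some integer $N$; that is, $\text{Ann}_R(s^N) = \text{Ann}_R(s^{N+l})$ for every $l \geq 0$. My candidate is to take any $k \geq N$.

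Next I would verify the injectivity of ${\G}(n, s^kR) \to {\G}(n, R_s)$ directly. Suppose $\alpha \in {\G}(n, s^kR)$ maps to ${\I}_n$ in ${\G}(n, R_s)$; by hypothesis we may write $\alpha - {\I}_n = s^k \beta$ for some $\beta \in M_n(R)$. Each entry $(s^k\beta)_{ij}$ vanishes in $R_s$, so $s^{m_{ij}}(s^k\beta)_{ij} = 0$ for some $m_{ij} \geq 0$. Since there are only finitely many entries, taking $m := \max_{i,j} m_{ij}$ gives $s^{m+k}\beta_{ij} = 0$ for every $i,j$. By the chain stabilization and the choice $k \geq N$, this forces $\beta_{ij} \in \text{Ann}_R(s^{m+k}) = \text{Ann}_R(s^k)$, so $s^k \beta_{ij} = 0$, and hence $\alpha = {\I}_n$. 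This argument is entrywise and so is insensitive to whether we work in the linear, symplectic, or orthogonal case, since all three groups ${\G}(n, A)$ sit inside $M_n(R)$.

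For the moreover clause, I would simply observe that ${\E}(n, R, s^kR)$ is, by definition, a subgroup of ${\G}(n, s^kR)$: its generators (and hence all its products) reduce to ${\I}_n$ modulo $s^kR$. Therefore the map ${\E}(n, R, s^kR) \to {\E}(n, R_s)$ is the restriction of the injective map ${\G}(n, s^kR) \to {\G}(n, R_s)$ and is in particular injective as well. I do not expect any serious obstacle here; the only delicate point is that the same exponent $k$ must kill torsion for all finitely many entries simultaneously, which is precisely what the Noetherian stabilization argument arranges.
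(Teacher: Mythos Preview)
Your argument is correct and is the standard proof of this fact: stabilize the ascending chain of annihilators $\mathrm{Ann}_R(s^m)$ by Noetherianness, take $k$ at or beyond the stabilization index, and then observe that any entry of $\alpha-{\I}_n$ that is $s$-torsion in $R_s$ is already killed by $s^k$. The ``moreover'' clause follows exactly as you say, since every element of ${\E}(n,R,s^kR)$ (whether one takes the subgroup generated by elementary matrices with entries in $s^kR$ or its normal closure in ${\E}(n,R)$) is congruent to ${\I}_n$ modulo $s^kR$ and hence lies in ${\G}(n,s^kR)$.

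As for comparison: the paper does not supply its own proof of this lemma. It simply records the statement with a citation to \cite{HV}, Lemma~5.1, and moves on. Your write-up is essentially the argument one finds behind that citation, so there is nothing to contrast.
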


The next two lemma's will be used in intermediaries to prove the equivalent conditions mentioned in Theorem \ref{thm 22}. We  state it without proof.
\begin{lm}\label{lemm 14} Let $v = (v_1, v_2, \ldots, v_n)$ be a unimodular row over over a 
commutative semilocal ring $R$. Then the row $(v_1, v_2, \ldots, v_n)$ is completable; {\it i.e.}  $(v_1, v_2, \ldots, v_n)$ 
elementarily equivalent to the row $(1,0, \ldots, 0)$; 
{\it i.e.} their exists $\epsilon \in {\E}_n(R)$ such that $(v_1, v_2, \ldots, v_n)\epsilon = (1,0, \ldots, 0)$.
\end{lm}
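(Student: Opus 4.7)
The plan is to exploit the semilocal structure of $R$ by reducing modulo the Jacobson radical $J = J(R)$. Since $R$ is semilocal, $\bar{R} := R/J$ decomposes as a finite direct product of fields $k_1 \times \cdots \times k_t$. The image $\bar{v}$ of the unimodular row $v$ is unimodular over $\bar{R}$, so its projection onto each field factor is a nonzero row vector of length $n$.

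First I would produce an elementary matrix $\bar{\epsilon} \in {\E}_n(\bar{R})$ sending $\bar{v}$ to $(1,0,\ldots,0)$. Over each field $k_i$, the group ${\E}_n(k_i)$ acts transitively on nonzero rows of length $n \geq 2$ by classical Gauss elimination using transvections, so there exists $\bar{\epsilon}_i \in {\E}_n(k_i)$ with $\bar{v}^{(i)}\bar{\epsilon}_i = (1,0,\ldots,0)$. Using the CRT decomposition of $\bar{R}$, these factor-wise elementary matrices assemble into a single element of ${\E}_n(\bar{R})$: each generator ${\E}_{pq}(\lambda^{(i)})$ appearing in the product for factor $i$ is realized inside ${\E}_n(\bar{R})$ as ${\E}_{pq}$ of the element of $\bar{R}$ whose $i$-th coordinate is $\lambda^{(i)}$ and whose other coordinates vanish. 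Finally, $\bar{\epsilon}$ lifts to $\epsilon \in {\E}_n(R)$ by choosing, for each elementary generator appearing in the product, an arbitrary preimage in $R$ of its parameter, since ${\E}_{pq}(\bar{\lambda})$ lifts to ${\E}_{pq}(\lambda)$ for any $\lambda$ reducing to $\bar{\lambda}$.

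Applying $\epsilon$ yields $v\epsilon = (1 + r_1, r_2, \ldots, r_n)$ with each $r_j \in J$, so $u := 1 + r_1$ is a unit of $R$. The column operations ${\E}_{1j}(-u^{-1}r_j)$ for $j = 2, \ldots, n$ reduce $v\epsilon$ to $(u, 0, \ldots, 0)$. Finally, the standard identity ${\E}_{12}(1)\,{\E}_{21}(u^{-1} - 1)\,{\E}_{12}(-u)$ converts $(u, 0, \ldots, 0)$ into $(1, 0, \ldots, 0)$, completing the reduction.

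The main obstacle is the assembly step: verifying that ${\E}_n$ is compatible with finite direct products of rings, so that factor-wise elementary matrices over $k_1 \times \cdots \times k_t$ genuinely lie in ${\E}_n(\bar{R})$ and not merely in $\prod_i {\E}_n(k_i)$. Once that compatibility is in place, the rest of the argument is routine: a single lift along the surjection $R \twoheadrightarrow \bar{R}$ followed by a short sequence of elementary column operations using the unit $1 + r_1 \in 1 + J$.
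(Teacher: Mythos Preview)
Your argument is correct and complete. The reduction modulo the Jacobson radical, the factor-wise Gauss elimination over the residue fields, the assembly via the identity ${\E}_n(R_1\times\cdots\times R_t)\cong{\E}_n(R_1)\times\cdots\times{\E}_n(R_t)$ (which you rightly flag and which follows immediately by writing each generator ${\E}_{pq}(\lambda^{(i)})$ as ${\E}_{pq}$ of the idempotent-supported element), the lift along $R\twoheadrightarrow R/J$, and the final clean-up using the unit $1+r_1$ are all standard and sound. Your explicit three-generator identity taking $(u,0,\ldots,0)$ to $(1,0,\ldots,0)$ checks out as written.

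By way of comparison: the paper does not prove this lemma at all but simply cites Lemma~1.2.21 of \cite{RB1}. Your write-up therefore supplies what the paper omits, and the route you take---Jacobson radical plus Chinese remainder over the semisimple quotient---is exactly the classical one that such a citation would point to. The only tacit assumption worth making explicit is $n\ge 2$, needed both for transitivity of ${\E}_n(k_i)$ on nonzero rows and for the final unit-killing step; this is harmless in the paper's context since the lemma is only invoked for $n\ge 3$.
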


\begin{proof}
{\it cf.}  \cite{RB1}, Lemma $1.2.21$.
\end{proof}

\begin{lm}\label{lemm 15} Let $A$ be a ring and $v \in {\E}(n, A)e_1$. Let $w \in A^n$ be a 
column vector such that $\langle v, w \rangle = 0$. Then ${\I}_n + M(v, w) \in E(n, A)$.
\end{lm}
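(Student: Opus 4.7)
The plan is a two-step reduction: first pull $v$ back to $e_1$ using the hypothesis $v\in\E(n,A)e_1$, and then verify the conclusion directly for $v = e_1$.

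First, I would write $v = \epsilon e_1$ with $\epsilon\in\E(n,A)$ and establish the conjugation identity $M(v,w) = \epsilon\, M(e_1,w')\,\epsilon^{-1}$ for an appropriately chosen $w'$. In the linear case, taking $w' = \epsilon^{t}w$ gives $\epsilon M(e_1,w')\epsilon^{-1} = \epsilon e_1(\epsilon^{t}w)^{t}\epsilon^{-1} = \epsilon e_1 w^{t}\epsilon\,\epsilon^{-1} = vw^{t} = M(v,w)$. In the symplectic and orthogonal cases, take $w' = \epsilon^{-1}w$ and use the invariance $\epsilon^{t}\psi_m\epsilon = \psi_m$ (resp.\ $\widetilde{\psi}_m$) to derive $\widetilde{w'} = \widetilde{w}\epsilon$ and $\widetilde{e_1} = \widetilde{v}\epsilon$; substituting into $M(e_1,w') = e_1\widetilde{w'}\pm w'\widetilde{e_1}$ and conjugating by $\epsilon$ recovers $v\widetilde{w}\pm w\widetilde{v} = M(v,w)$. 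A parallel short check shows $\langle e_1,w'\rangle = \langle v,w\rangle = 0$. Consequently
\[ \I_n + M(v,w) \;=\; \epsilon\bigl(\I_n + M(e_1,w')\bigr)\epsilon^{-1}, \]
and since $\epsilon\in\E(n,A)$, it suffices to treat the case $v = e_1$.

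Second, assume $v = e_1$. In the linear case, $\langle e_1,w\rangle = 0$ forces $w_1 = 0$, so $\I_n + e_1 w^{t} = \prod_{j=2}^{n}\E_{1j}(w_j)$ is manifestly a product of elementary generators. In the symplectic and orthogonal cases, $\widetilde{e_1}w$ equals (up to sign) the single coordinate $w_{\sigma(1)}$, so the hypothesis forces $w_{\sigma(1)} = 0$. The matrix $\I_{2m} + e_1\widetilde{w}\pm w\widetilde{e_1}$ then has nonzero off-diagonal entries only in row $1$, row $\sigma(1)$, column $1$, and column $\sigma(1)$. I would group these entries by index pair and use the splitting lemma (Lemma \ref{lemm 16}) to display the matrix as an explicit product of $se_{ij}(w_k)$'s (respectively $oe_{ij}(w_k)$'s); the sign $-(-1)^{i+j}$ baked into the generator definitions will precisely cancel the signs produced by $e_1\widetilde{w}$ and $w\widetilde{e_1}$.

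The hard part will be the explicit combinatorial matching in the non-linear subcases: one must order the generators so that the pre-packaged cross terms $-(-1)^{i+j}ze_{\sigma(j)\sigma(i)}$ accumulate to exactly the second summand $\pm w\widetilde{e_1}$, without leaving residue in unintended positions. Once this bookkeeping is settled, the reduction in the first step is purely formal, resting only on the form-invariance of $\epsilon\in\E(n,A)$.
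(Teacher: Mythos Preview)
Your proposal is sound and follows the standard route. The paper itself does not give a proof of this lemma at all; it simply cites Lemma~2.2.4 of \cite{RB1} (the first author's thesis). So there is nothing in the present paper to compare against beyond that citation, but the argument you outline---conjugate by $\epsilon\in\E(n,A)$ to reduce to $v=e_1$, then write $\I_n+M(e_1,w')$ explicitly as a product of generators---is exactly the classical proof one finds in the cited source and elsewhere.

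A couple of small remarks on your write-up. In the reduction step you are using only that $\E(n,A)$ is a group (hence closed under conjugation by its own elements), not normality in $\G(n,A)$; it is worth making this explicit since the lemma is used in the paper precisely to \emph{establish} normality. In the base case for the non-linear groups, the cross terms you flag do arise: for instance in the symplectic case, multiplying $se_{1j}(z)$ and $se_{1k}(z')$ with $j=\sigma(k)$ produces a contribution in position $(1,2)$, which must be absorbed into a final $se_{12}(\,\cdot\,)$ factor. This is routine bookkeeping, and you have correctly identified it as the only place requiring care.
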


\begin{proof}
{\it cf.} \cite{RB1}, Lemma $2.2.4$.
\end{proof}
\begin{tr}
\label{thm 22} The followings are equivalent for any graded ring $A=\oplus_{i = 0 }^{\infty} A_i$ for
$n \geq 3$ in the linear case and $n \geq 6$ otherwise.

\begin{enumerate}

\item\label{Prop 11} \textbf{\rm{(Normality)}}:
${\E}(n, A)$ is a normal subgroup of ${\G}(n, A)$.

\item\label{Prop 16}
If $v \in {\rm Um}_n(A)$ and $\langle v, w \rangle = 0$, then ${\I}_n + M(v, w) \in {\E}(n, A)$.

\item\label{Prop 12} \textbf{\rm{(Local-Global Principle)}}:
Let $\alpha \in {\G}(n, A)$ with $\alpha^+(0) = {\I}_n$. If 
for every maximal ideal $\mathfrak{m}$ of $A_0$
$\alpha_\mathfrak{m} \in {\E}(n, A_\mathfrak{m})$,  
then $\alpha \in {\E}(n, A)$.

\item\label{Prop 13} \textbf{\rm{(Dilation Lemma)}}: 
Let $\alpha \in {\G}(n, A)$ with $\alpha^+(0) = {\I}_n$ and $\alpha_s \in {\E}(n,A_s)$ for some non-zero divisor $s \in A_0$.
Then there exists $\beta\in {\E}(n,A)$ such that 
$$\beta_s^{+}(b)=\alpha_s^{+}(b)$$ for some $b = s^l$; $l \gg 0$. {\it i.e.} 
$\alpha^+_s(s^l)$ will be defined over $A$ for $l \gg 0$. 

\item \label{Prop 14} If $\alpha \in {\E}(n,A)$, then $\alpha^+(a) \in {\E}(n,A)$, for every $a \in A_0$.

\item \label{Prop 15}   If $v \in {\E}(n, A)e_1,$ and $\langle v, w \rangle = 0$, then ${\I}_n + M(v,w) \in {\E}(n, A)$.

\end{enumerate}
\end{tr}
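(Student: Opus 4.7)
The plan is to establish all six conditions via a circular chain of implications. The natural cycle is $(1) \Rightarrow (5) \Rightarrow (4) \Rightarrow (3) \Rightarrow (1)$, with the remaining items $(2)$ and $(6)$ folded in via $(1) \Rightarrow (6)$, $(2) \Rightarrow (1)$ and $(3) \Rightarrow (2)$. Most of these arrows are formal; the substantive content is concentrated in $(5) \Rightarrow (4) \Rightarrow (3) \Rightarrow (1)$, where Swan--Weibel's homotopy trick reduces the graded Dilation Lemma to its known polynomial analogue and a Quillen--Suslin-type patching argument upgrades it to the Local--Global Principle.

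I would first dispatch the formal implications. For $(1) \Rightarrow (5)$, if $\alpha = \prod ge_{i_k j_k}(\lambda_k)$ then Lemma \ref{lemm 16} together with the ring map $\epsilon$ yields $\alpha^{+}(a) = \prod ge_{i_k j_k}(\lambda_k^{+}(a))$, which is visibly in ${\E}(n,A)$. For $(1) \Rightarrow (6)$, writing $v = \epsilon e_1$ with $\epsilon \in {\E}(n,A)$ and using ${\I}_n + M(v,w) = \epsilon({\I}_n + M(e_1, \epsilon^{-1}w))\epsilon^{-1}$ (with the appropriate convention in the non-linear cases), the orthogonality $\langle v,w \rangle = 0$ forces the first coordinate of $\epsilon^{-1}w$ to vanish, making the middle factor a product of elementary generators, while the outer conjugation is absorbed by normality. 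For $(2) \Rightarrow (1)$, each $ge_{ij}(\lambda)$ has the shape ${\I}_n + M(e_i, *)$, so its conjugate by $\alpha \in {\G}(n,A)$ becomes ${\I}_n + M(\alpha e_i, *)$, where $\alpha e_i$ is unimodular and the inner product vanishes by the form-preserving property of $\alpha$, so $(2)$ applies directly.

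For the substantive step $(5) \Rightarrow (4)$, given $\alpha \in {\G}(n,A)$ with $\alpha^{+}(0) = {\I}_n$ and $\alpha_s \in {\E}(n,A_s)$, I apply Swan--Weibel to form $\alpha^{+}(X) \in {\G}(n,A[X])$. By $(5)$ over $A_s$ its localization $(\alpha_s)^{+}(X)$ lies in ${\E}(n,A_s[X])$, and the classical polynomial Dilation Lemma of \cite{rrr} yields $\widetilde{\beta}(X) \in {\E}(n,A[X])$ matching $\alpha^{+}(X)$ after localization and dilation by a high power of $s$; Lemma \ref{lemm 12} then closes the argument by evaluation at $X = 1$. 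For $(4) \Rightarrow (3)$, I choose a finite cover $\mathrm{Spec}(A_0) = \bigcup D(s_i)$ with $\alpha_{s_i} \in {\E}(n, A_{s_i})$. At each stage $(4)$ together with Lemma \ref{lemm 12} produces an elementary lift $\beta_i$ agreeing with the residue of $\alpha$ after $s_i$-localization modulo a high power; Lemma \ref{adjust} rearranges the product so that each unprocessed residue is passed forward to subsequent stages, while Lemma \ref{lemm 13} controls injectivity of localization and guarantees the iterated lifts recover $\alpha$ exactly. For $(3) \Rightarrow (1)$, given $\epsilon \in {\E}(n,A)$ the conjugate $\gamma = \alpha\epsilon\alpha^{-1}$ satisfies $\gamma_{\mathfrak{m}} \in {\E}(n, A_{\mathfrak{m}})$ for every maximal $\mathfrak{m}$ by classical normality over local rings; after factoring off $\gamma^{+}(0) \in {\E}(n,A_0)$ (via ungraded normality over $A_0$), what remains has trivial constant term and $(3)$ applies.

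The main obstacle is the patching step $(4) \Rightarrow (3)$, where one must carefully track the interplay between successive dilations, the growing powers $s_i^{l_i}$, and the compatibility of each new $\beta_i$ with the residue left by the previous ones; Lemma \ref{lemm 11} is the structural ingredient ensuring that elements with trivial constant term decompose as conjugates of elementary generators supported in $A_{+}$, which is precisely what makes the iterative patching along the cover $\{D(s_i)\}$ terminate. Finally $(3) \Rightarrow (2)$ folds in via local-global: given $v \in \Um_n(A)$ with $\langle v,w\rangle = 0$, local completability of $v_{\mathfrak{m}}$ over the semilocal $A_{\mathfrak{m}}$ (Lemma \ref{lemm 14}) combined with Lemma \ref{lemm 15} yields $({\I}_n + M(v,w))_{\mathfrak{m}} \in {\E}(n, A_{\mathfrak{m}})$, and after dividing off the constant-term factor in ${\E}(n,A_0)$, $(3)$ delivers the global conclusion.
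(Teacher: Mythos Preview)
Your overall strategy is close to the paper's, and the substantive steps---the Quillen patching for $(4)\Rightarrow(3)$ via the telescoping product and Lemma~\ref{lemm 13}, and the reductions to \cite{rrr} for the formal arrows---are handled in the same spirit. Two points deserve attention.

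First, your chain never leaves $(6)$: you prove $(1)\Rightarrow(6)$ but no implication $(6)\Rightarrow(\cdot)$, so formally you obtain only $(1)\Leftrightarrow(2)\Leftrightarrow(3)\Leftrightarrow(4)\Leftrightarrow(5)\Rightarrow(6)$. The paper runs a single cycle $(6)\Rightarrow(5)\Rightarrow(4)\Rightarrow(3)\Rightarrow(2)\Rightarrow(1)\Rightarrow(6)$ and closes the loop via $(6)\Rightarrow(5)$: each generator $ge_{ij}(a)$ is ${\I}_n+a\,M(e_i,e_j)$ with $e_i\in{\E}(n,A)e_1$ and $\langle e_i,a e_j\rangle=0$, so $(6)$ applied termwise to $\alpha^+(b)=\prod\bigl({\I}_n+a_k^+(b)\,M(e_{i_k},e_{j_k})\bigr)$ gives $\alpha^+(b)\in{\E}(n,A)$. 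Your argument labelled ``$(1)\Rightarrow(5)$'' is in fact an unconditional proof of $(5)$---it nowhere uses normality---so all six statements do end up true in your scheme; but for a clean proof of \emph{equivalence} you should supply the missing arrow out of $(6)$.

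Second, your $(5)\Rightarrow(4)$ differs from the paper's. You pass to $\alpha^+(X)\in{\G}(n,A[X])$ and invoke the polynomial Dilation Lemma of \cite{rrr} as a black box; but in \cite{rrr} that lemma is itself one of the equivalent conditions, not a free-standing theorem, so you must say which hypothesis (derived from your $(5)$) feeds into the \cite{rrr} chain to make it available over $A$. The paper's argument is more self-contained and actually uses $(5)$: since $\alpha^+(0)={\I}_n$, every off-diagonal entry of $\alpha_s$ (and each diagonal entry minus $1$) lies in $(A_+)_s$; applying $(5)$ over $A_s$ gives $\alpha_s^+(s^l)\in{\E}(n,A_s)$, and for $l$ large enough to clear all denominators the element admits a natural pullback to ${\E}(n,A)$. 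Your detour through $A[X]$ is not wrong, but it obscures where $(5)$ enters and requires the extra justification. (Also, your appeal to Lemma~\ref{lemm 12} at this stage is circular: that lemma \emph{assumes} the Dilation Lemma, so it cannot be used to prove it---the evaluation $X=1$ is just the ring map $A[X]\to A$ and needs no lemma.)
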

\begin{proof}

\noindent (\ref{Prop 15}) $\Rightarrow$ (\ref{Prop 14}):
Let $\alpha  = \underset{k=1}{\overset{t}\Pi} \big({\I}_n + a\;M(e_{i_k}, e_{j_k})\big) \text{ where } a \in A$, and $t\ge 1$, a positive integer. 
Then $$\alpha^+(b) =\underset{k=1}{\overset{t}\Pi} \big( {\I}_n + a^+(b)\;M(e_{i_k}, e_{j_k})\big),$$ where 
$b \in A_0$. Take $v = e_i$ and $w = a^+(b)e_j$, Then $\alpha = {\I}_n + M(v,w)$ and $\langle v, w \rangle = 0$. Indeed,

Linear case:
\begin{align*}
{\rm M}(v, w) = \; & v w^t =  e_i(a^+(b)e_j)^t = a^+(b) e_i e_j^t = a^+(b)\;{\rm M}(e_i, e_j), \\
\langle v, w\rangle = \;  &v^tw =  e_i^t(a^+(b)e_j)^t  =  a^+(b)\; e_i^t e_j =  a^+(b) \langle e_i, e_j \rangle = 0. 
\end{align*}

Symplectic case:\newline
\begin{align*}
{\rm M}(v, w) = \; & v w^t\psi_m +  w v^t\psi_m =   e_i (a^+(b)e_j)^t\psi_m + a^+(b)e_j e_i^t\psi_m \\
= \; & a^+(b) e_i e_j^t\psi_m + a^+(b)e_j e_i^t\psi_m =  a^+(b)\;{\rm M}(e_i, e_j), \\
\end{align*}
\begin{align*}
\langle e_i, e_j \rangle =  e_i^t\psi_me_j = (e_i^t\psi_m e_i)(e_i^te_j) = \psi_m (e_i^te_j) = 0 \text{ hence, } \\
\langle v, w\rangle =  \; v^t\psi_mw =  e_i^t\psi_m a^+(b)e_j =  a^+(b)e_i^t e_j =  a^+(b)\langle e_i, e_j \rangle = 0.
\end{align*}

Orthogonal case:\newline
\begin{align*}
{\rm M}(v, w) = \; & v.w^t\widetilde{\psi}_m -  w.v^t\widetilde{\psi}_m =  e_i.(a^+(b)e_j)^t\widetilde{\psi}_m - a^+(b)e_j.e_i^t\widetilde{\psi}_m \\
= \; & a^+(b) e_i.e_j^t\widetilde{\psi}_m - a^+(b)e_j.e_i^t\widetilde{\psi}_m =  a^+(b)\;{\rm M}(e_i, e_j),
\end{align*}
\begin{align*}
\langle e_i, e_j \rangle =  e_i^t\widetilde{\psi}_me_j = (e_i^t\widetilde{\psi}_m e_i)(e_i^te_j) = \widetilde{\psi}_m (e_i^te_j) = 0  \text{ hence, } \\
\langle v, w\rangle = \; v^t\widetilde{\psi}_mw =  e_i^t\widetilde{\psi}_ma^+(b)e_j =  a^+(b)e_i^t e_j =  a^+(b)\langle e_i, e_j \rangle = 0.
\end{align*}

Hence applying (\ref{Prop 15}) over ring $A$, we have $\alpha^+(b) \in {\E}(n, A)$. Therefore $\alpha^+(b) \in {\E}(n, A)$ for $b \in A_0$.

\noindent (\ref{Prop 14}) $\Rightarrow$  (\ref{Prop 13}):
Since $\alpha_s \in {\E}(n, A_s)$ with  $({\alpha_0}_s) = {\I}_n$, the diagonal entries of $\alpha$ 
are of the form $1 + g_{ii}$, where $g_{ii} \in ({A_+})_s$ and off-diagonal entries are of the 
form $g_{ij}$, where $i\ne j$ and $g_{ij} \in ({A_+)}_s$.

We choose $l$ to be large enough such that $s^l$ is greater than the common denominator of all 
$g_{ii}$ and $g_{ij}$. Then using  (\ref{Prop 14}), we get $$\alpha_s^+(s^l) \in {\E}(n, A_s).$$ 
Since that $\alpha^+(s^l)$ permits a natural pullback (as denominators are cleared), we have $\alpha^+(s^l) \in {\E}(n, A)$.

\noindent (\ref{Prop 13}) $\Rightarrow$  (\ref{Prop 12}):

Since $\alpha_\mathfrak{m} \in {\E}(n ,A_\mathfrak{m})$, we have an element $s \in A_0 - \mathfrak{m}$ such that 
$\alpha_s \in {\E}(n, A_s)$. Let $s_1, s_2, \ldots, s_r \in A_0$ be non-zero divisors with $s_i \in A_0 - \mathfrak{m}_i$ 
such that $\langle s_1, s_2, \ldots, s_r \rangle = A$. From  (\ref{Prop 13}) we have $\alpha^+(b_i) \in {\E}(n, A),$ 
for some $b_i = {{s_i}^{l_i}}$ with $ b_1 + \cdots + b_r  = 1 $. Now consider $\alpha_{s_1s_2\ldots s_r}$, 
which is the image of $\alpha$ in $A_{s_1 \cdots s_r}$. Due to Lemma \ref{lemm 13}, $\alpha \mapsto \alpha_{s_1s_2 \cdots  s_r}$ 
is injective and hence we can perform our calculation in $A_{s_1 \cdots s_r}$ and then pull it back to $A$.
\begin{align*}
 \alpha_{s_1s_2\ldots s_r} = \; & \alpha_{s_1s_2\ldots s_r}^+(b_1 + b_2 \cdots + b_r) \\ = 
 \; & {{\big((\alpha_{s_1})}_{s_2s_3\ldots}}\big)^+(b_1 + \cdots+ b_r) {{\big((\alpha_{s_1})}_{s_2s_3\ldots}}\big)^+(b_2 + \cdots + b_r)^{-1} \cdots \\
 & {\big((\alpha_{s_i})}_{s_1 \ldots \hat{s_i} \ldots s_r}\big)^+(b_i + \cdots + b_r)
 {\big((\alpha_{s_i})}_{s_1 \ldots \hat{s_i} \ldots s_r}\big)^+(b_{i+1} + \cdots + b_r)^{-1}\\ 
 & \cdots {{\big((\alpha_{s_r})}_{s_1s_2\ldots s_{r-1}}}\big)^+(b_r){{\big((\alpha_{s_r})}_{s_1s_2\ldots s_{r-1}}}\big)^+(0)^{-1}
\end{align*} 
Observing that each $${\big((\alpha_{s_i})}_{s_1s_2 \ldots \hat{s_i} \ldots s_r}\big)^+
(b_i + \cdots + b_r){\big((\alpha_{s_i})}_{s_1 \ldots \hat{s_i} \ldots s_r}\big)^+(b_{i+1} + \cdots + b_r)^{-1} \in {\E}(n, A)$$ 
due to Lemma \ref{lemm 12} (here $\hat{s}_i$ means we omit $s_i$ in the product $s_1\cdots \hat{s_i}\cdots s_r$), 
we have $\alpha_{s_1 \cdots s_r} \in {\E}(n, A_{s_1 \cdots s_r})$ and hence $\alpha \in {\E}(n, A)$.
\vspace{0.2em}

\begin{re}\label{rmk 001}
Following is a commutative diagram (here we are assuming $\langle s_i, s_j \rangle = A$):

\Com
$$\xymatrix{ A  \ar[r] \ar[d] & A_{s_i} \ar[d]\\
 A_{s_j}\ar[r] & A_{s_i s_j} } $$

Let $\theta_i = \alpha_{s_i}^+(b_i + \cdots + b_r)\alpha_{s_i}^+(b_{i-1}+\cdots+b_r) \in A_{s_i}$ and $\theta_{ij}$ 
be the image of $\theta_{i}$ in $A_{s_is_j}$ and similarly $\pi_j = \alpha_{s_j}^+(b_j + \cdots + b_r)\alpha_{s_j}^+(b_{j-1}+\cdots+b_r) \in A_{s_j}$ 
and $\pi_{s_i,s_j}$ be its image in $A_{s_i s_j}$. Then due to Lemma \ref{lemm 13} the product $\big(\theta_{s_is_j}\big)\big(\pi_{s_is_j}\big)$ 
can be identified with the product $\theta_i\pi_j$.
\end{re}

\noindent (\ref{Prop 12}) $\Rightarrow$ (\ref{Prop 16}): 
Since polynomial rings are special case of graded rings, the result follows by using  $3 \Rightarrow 2$ in \cite{rrr}, $\S 3$.\vp

\noindent (\ref{Prop 16}) $\Rightarrow$ (\ref{Prop 11}) $\Rightarrow$ (\ref{Prop 15}):
The proof goes as in \cite{rrr}. (\$$3$, $2 \Rightarrow 1 \Rightarrow 7 \Rightarrow 6$). 

\end{proof}

\section{Local-Global Principle for commutator subgroup}
  
In this section we deduce the analogue of Local-Global Principle for the commutator subgroup of the linear and the symplectic group over graded rings. 
Unless mentioned otherwise, we assume $n \ge 3$ for the linear case and $n \geq 6$ for the symplectic case. \vp 

Let us begin with the following well-known fact for semilocal rings.
\begin{lm}\label{lemm 26} Let $A$ be a semilocal commutative ring with identity. Then for $n \geq 2$ in the linear case and 
$n \geq 4$ in the symplectic case, one gets $${\es}(n, A) = {\E}(n, A).$$
\end{lm}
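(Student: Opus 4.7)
The plan is to prove each case by induction on the rank $n$, using Lemma \ref{lemm 14} (completability of unimodular rows over a semilocal ring) to perform a reduction to lower dimension. The containment ${\E}(n,A)\subseteq \es(n,A)$ is immediate from the definitions, so all the content is in the reverse inclusion.

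For the linear case, fix $\alpha\in {\SL}_n(A)$, $n\ge 2$. Its first row $v$ is a unimodular row of length $n$, so by Lemma \ref{lemm 14} there exists $\epsilon\in {\E}_n(A)$ with $v\epsilon=(1,0,\ldots,0)$. Then $\alpha\epsilon$ has the shape
\[
\alpha\epsilon=\begin{pmatrix} 1 & 0 \\ \ast & \beta \end{pmatrix},\qquad \beta\in{\SL}_{n-1}(A),
\]
and left-multiplication by a product of elementary matrices of type ${\E}_{i1}$ clears the first column below the $(1,1)$-entry. The induction hypothesis puts $\beta\in{\E}_{n-1}(A)\subseteq{\E}_n(A)$, so $\alpha\in{\E}_n(A)$. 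The base case $n=2$ is the standard statement that a semilocal ring has stable rank one, which is exactly the $n=2$ instance of Lemma \ref{lemm 14}.

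For the symplectic case, fix $\alpha\in {\Sp}_{2m}(A)$ with $m\ge 2$. The first column $v=\alpha e_1$ is unimodular, and Lemma \ref{lemm 14} produces an element of ${\E}_{2m}(A)$ taking $v$ to $e_1$. The task is to upgrade this to an element of ${\ESp}_{2m}(A)$: one peels off the coordinates of $v$ outside the hyperbolic plane $\langle e_1,e_2\rangle$ using the symplectic elementary generators $se_{i1}(z)$ with $i\ne \sigma(1)$ (each such generator affects only paired coordinates and is compatible with $\psi_m$), and then handles the remaining two-coordinate row by an ${\ESp}_2$-type reduction valid for the semilocal ring. Having obtained $\epsilon\in{\ESp}_{2m}(A)$ with $\epsilon\alpha\cdot e_1=e_1$, the symplectic constraint $(\epsilon\alpha)^t\psi_m(\epsilon\alpha)=\psi_m$ forces the second row of $\epsilon\alpha$ to equal $e_2^t$. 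A further product of symplectic elementaries clears the remaining entries in the first two rows and columns, leaving a block matrix $\mathrm{diag}({\I}_2,\beta)$ with $\beta\in{\Sp}_{2m-2}(A)$, and induction on $m$ finishes the argument. The base case $m=2$ is the classical identity ${\Sp}_4(A)={\ESp}_4(A)$ over a semilocal ring.

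The principal obstacle is the symplectic completion step. Lemma \ref{lemm 14} as stated yields only an ordinary elementary matrix, while we need a symplectic elementary completion. The fix is to do the reduction one coordinate at a time: the generators $se_{ij}(z)$ can be used to modify an individual entry $v_i$ (outside the paired coordinates $\{1,\sigma(1)\}=\{1,2\}$) at the price of a controlled change in $v_{\sigma(i)}$, and semilocality, applied modulo each maximal ideal via Nakayama, ensures that at each stage the remaining truncated row is still unimodular so that suitable coefficients exist. Once the first column is reduced to $e_1$, everything after that is forced by the symplectic condition and handled by direct block calculation.
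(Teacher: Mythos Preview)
The paper does not prove this lemma; its ``proof'' is a bare citation to \cite{RB1}, Lemma~1.2.25. Your direct argument via unimodular-row reduction is the standard one and is almost certainly what that reference contains, so in substance you and the paper agree.

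Two points in the symplectic case should be tightened. First, the base case is misidentified: the induction takes ${\Sp}_{2m}$ down to ${\Sp}_{2(m-1)}$, so from $m=2$ you land in ${\Sp}_2(A)={\SL}_2(A)$, and the genuine base case is ${\SL}_2(A)={\E}_2(A)={\ESp}_2(A)$ over a semilocal ring (stable rank~$1$), not ``${\Sp}_4={\ESp}_4$'', which is precisely the first instance of what you are trying to prove. Second, your last paragraph makes the symplectic reduction of the first column sound harder than it is. Since a semilocal ring has stable rank~$1$, there exist $a_2,\dots,a_{2m}$ with $v_1+\sum_j a_jv_j$ a unit, and the generators $se_{1j}(a_j)$ realize exactly this modification of $v_1$; the coupled change each one produces in $v_{\sigma(j)}$ is irrelevant at that stage. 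Once $v_1\in A^{*}$, the remaining entries are cleared directly by the appropriate $se_{j1}$, $se_{21}$. No ``modulo each maximal ideal via Nakayama'' bookkeeping is needed. Also note that after this the first column is $ue_1$ for some unit $u$, so the symplectic constraint gives second row $u^{-1}e_2^t$, not $e_2^t$; absorb the unit via $\mathrm{diag}(u^{-1},u,1,\dots,1)\in{\E}_2(A)\subset{\ESp}_{2m}(A)$ before continuing.
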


\begin{proof}
\textit{cf.} Lemma $1.2.25$ in \cite{RB1}.
\end{proof}

\begin{re}
If $\alpha = (\alpha_{ij}/1) \in {\G}(n, A_s)$, then $\alpha$ has a natural pullback $\beta = (\alpha_{ij}) \in {\G} (n, A)$ such that $\beta_s = \alpha$.
If $\alpha_s \in {\es}(n, A_s)$ such that it admits a natural pullback $\beta \in {\G}(n, A)$,
then $\beta \in {\es}(n, A)$.
\end{re}

The next lemma deduces an analogue result of ``Dilation Lemma'' (Theorem \ref{thm 22}-($3$)) \underline{ for ${\es}(n, A)$}; the special linear (resp. symplectic) group.
\begin{lm}\label{lemm 24} Let $\alpha = (\alpha)_{ij} \in {\es}(n, A)$. Then $\alpha^+(a) \in {\es}(n, A),$ 
where $a \in A_0$. Hence if $\alpha \in G(n, A)$ with $\alpha^+(0) = {{\I}_n}$ and $\alpha_s \in {\es}(n, A_s) \text{ then } 
\alpha_s^+(b) \in {\es}(n, A)$ $($after identifying $\alpha_s^+(b)$ with its pullback by using Lemma \ref{lemm 13}$)$, 
where $s \in A_0$ is a non-zero divisor and $b = s^l$ with $l \gg 0$, for $n \geq 1$ in linear case and $n \geq 2$ in symplectic case. 
\end{lm}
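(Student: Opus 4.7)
My plan is to handle the two assertions of the lemma in turn, the first by a direct ring-homomorphism argument and the second by a dilation-type argument mirroring the proof of $(\ref{Prop 14})\Rightarrow(\ref{Prop 13})$ in Theorem \ref{thm 22}.

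For the first assertion, that $\alpha \in {\es}(n,A)$ implies $\alpha^+(a)\in {\es}(n,A)$ for $a\in A_0$, I would observe that $\epsilon \colon A \to A[X]$ defined in the paper is itself a ring homomorphism (a direct check on graded components shows $\epsilon(bc)=\epsilon(b)\epsilon(c)$), and the evaluation map $A[X]\to A$, $X\mapsto a$, is plainly a ring homomorphism. The group ${\es}(n,-)$ is cut out of ${\rm M}_n(-)$ by polynomial equations in the matrix entries: $\det = 1$ in the linear case and $\alpha^t\psi_m\alpha = \psi_m$ in the symplectic case. These equations involve only coefficients in $\mathbb{Z}\subseteq A_0$, which are fixed by both $\epsilon$ and the evaluation. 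Hence the composite $\alpha\mapsto \alpha^+(a)$ carries ${\es}(n,A)$ into itself. This requires only $n\ge 1$ in the linear case and $n=2m\ge 2$ in the symplectic case, matching the stated hypothesis.

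For the second assertion, since $\alpha^+(0)={\I}_n$ the degree-$0$ part of $\alpha$ is the identity, so we may write $\alpha = {\I}_n+\alpha_1+\cdots+\alpha_r$ with $\alpha_k$ of degree $k$, and $\alpha^+(X)={\I}_n+\alpha_1 X+\cdots+\alpha_r X^r$. Applying the first part over $A_s$ to $\alpha_s\in {\es}(n,A_s)$ yields $\alpha_s^+(a)\in {\es}(n,A_s)$ for every $a\in (A_s)_0$, in particular for $a=s^l$. Now I would choose $l\gg 0$ large enough that $s^{kl}$ clears all denominators in the entries of $(\alpha_k)_s$ for $k=1,\ldots,r$. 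Then $\alpha_s^+(s^l)$ admits a canonical pullback $\beta=\alpha^+(s^l)\in {\rm M}_n(A)$; this $\beta$ already belongs to ${\G}(n,A)$ because $\alpha\in {\G}(n,A)$ and the ring-homomorphism argument of the first part applies verbatim to ${\G}$ in place of ${\es}$.

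It remains to promote $\beta$ from ${\G}(n,A)$ to ${\es}(n,A)$. The defining relation of ${\es}$ is a polynomial identity in the entries with coefficients in $A_0$, so the discrepancy $\det(\beta)-1$ (linear case) or $\beta^t\psi_m\beta-\psi_m$ (symplectic case) is an element of $A$ (resp.\ ${\rm M}_n(A)$) that vanishes after localizing at $s$, because $\beta_s=\alpha_s^+(s^l)\in {\es}(n,A_s)$. Since $s$ is a non-zero divisor in $A_0$ the localization map $A\to A_s$ is injective, so the discrepancy already vanishes in $A$; equivalently, Lemma \ref{lemm 13} identifies $\alpha_s^+(s^l)\in {\es}(n,A_s)$ with its pullback $\beta\in {\es}(n,A)$. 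The only mildly delicate point is coordinating the choice of $l$ so that denominators are simultaneously cleared for all graded components, which is standard and relies crucially on the finiteness of the nonzero graded components of the entries of $\alpha$.
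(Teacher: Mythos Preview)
Your proposal is correct and follows essentially the same route as the paper. For the first assertion the paper argues via the identity $\det\alpha=\det\alpha^{+}(0)$ (using that units of an $\mathbb{N}$-graded ring lie in $A_0$) and then computes $\det\alpha^{+}(a)=\det\big(\alpha^{+}(a)\big)^{+}(0)=\det\alpha^{+}(0)=1$; your phrasing via the ring homomorphism $\epsilon$ followed by evaluation is the same mechanism, and has the minor advantage of treating the symplectic relation $\alpha^{t}\psi_m\alpha=\psi_m$ on equal footing with $\det=1$, whereas the paper's written proof addresses only the determinant explicitly. For the second assertion both arguments are identical: write the entries as $1+g_{ii}$ and $g_{ij}$ with $g_{ij}\in(A_{+})_s$, choose $l$ large enough to clear all denominators, and pull back; your extra remark that injectivity of $A\to A_s$ (Lemma~\ref{lemm 13}) transports the defining equation of ${\es}$ back to $A$ makes explicit what the paper leaves implicit in the phrase ``admits a natural pullback''.
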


\begin{proof}
Since $\text{ det }:A \rightarrow A^* \text{ (units of } A) \subset A_0,$ and $A_iA_j \subset A_{i+j}$, 
the non-zero component of $\alpha$ doesn't contribute for the value of the determinant and hence $$\text{det }\alpha =  
\text{det }\alpha^+(0).$$ Therefore, if $\beta = \alpha^+(a)$, then $$\text{det }\beta = \text{det }\beta^+(0) =  
\text{det }\big( \alpha^+(a)\big)^+(0) =  \text{det }\alpha^+(a \cdot 0) = \text{det }\alpha^+(0) = 1.$$ Hence $\alpha^+(a) \in {\es}(n, A)$.

Since $\alpha_s \in {\es}(n, A_s)$ with $\alpha^+(0) = {{\I}_n}$,  the diagonal entries are of the form $1 + g_{ii}$, and 
off diagonal entries are $g_{ij}$, where $g_{ij} \in {(A_+})_s$ for all $i, j$. 
We choose $b\in (s)$ such that $b=s^l$ with $l\gg 0$, so that $b$ can dilute 
the denominator of each entries. Then $\alpha_s^+(b) \in {\es}(n, A)$.
\end{proof}

The next lemma gives some structural information about commutators.
\begin{lm} \label{lemm 21} Let $\alpha , \beta \in {\es}(n, A)$ and  $A_0$ be a commutative semilocal ring. 
Then the commutator subgroup $$ [\alpha, \beta] \in [\alpha \alpha_0^{-1}, \beta \beta_0^{-1}]\;{\E}(n, A).$$
\end{lm}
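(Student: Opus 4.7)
The plan is to reduce the claim to a formal computation in the quotient group $\G(n,A)/\E(n,A)$, using the normality statement from Theorem~\ref{thm 22}.

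First I would check the preliminary point that $\alpha_0 := \alpha^+(0)$ and $\beta_0 := \beta^+(0)$ actually lie in $\es(n, A_0)$. Since $\alpha_0$ collects exactly the degree-$0$ components of $\alpha$, extracting the degree-$0$ part of the defining relation proves this in each case: for the linear/special-linear case, the determinant computation already used in Lemma~\ref{lemm 24} gives $\det \alpha_0 = \det \alpha = 1$; for the symplectic case, the form $\psi_m$ has entries in $A_0$, so isolating the degree-$0$ part of $\alpha^t \psi_m \alpha = \psi_m$ yields $\alpha_0^t \psi_m \alpha_0 = \psi_m$. Hence $\alpha_0, \beta_0 \in \es(n, A_0)$.

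Next, because $A_0$ is semilocal, Lemma~\ref{lemm 26} gives $\alpha_0, \beta_0 \in \E(n, A_0) \subseteq \E(n, A)$. By Theorem~\ref{thm 22}\,(\ref{Prop 11}), $\E(n,A)$ is normal in $\G(n,A)$, so passing to the quotient $\overline{\G} := \G(n,A)/\E(n,A)$ is legitimate. In $\overline{\G}$ we have $\overline{\alpha_0} = \overline{\beta_0} = \overline{\I_n}$, and therefore
\[ \overline{\alpha \alpha_0^{-1}} = \overline{\alpha}, \qquad \overline{\beta \beta_0^{-1}} = \overline{\beta}. \]

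The conclusion is then one line: computing commutators in $\overline{\G}$,
\[ \overline{[\alpha, \beta]} \;=\; [\overline{\alpha}, \overline{\beta}] \;=\; [\overline{\alpha \alpha_0^{-1}}, \overline{\beta \beta_0^{-1}}] \;=\; \overline{[\alpha \alpha_0^{-1}, \beta \beta_0^{-1}]}, \]
so that $[\alpha, \beta] \cdot [\alpha \alpha_0^{-1}, \beta \beta_0^{-1}]^{-1} \in \E(n, A)$, which is precisely the claimed inclusion $[\alpha, \beta] \in [\alpha \alpha_0^{-1}, \beta \beta_0^{-1}]\,\E(n, A)$. The only real substance is the observation $\alpha_0 \in \es(n,A_0)$; once that is in place, normality of $\E(n,A)$ together with Lemma~\ref{lemm 26} does all the work, and there is essentially no obstacle. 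The one care point in the write-up is to handle the linear and symplectic relations uniformly when extracting the degree-$0$ part.
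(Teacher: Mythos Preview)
Your argument is correct and uses the same two ingredients as the paper's proof: (i) $\alpha_0,\beta_0\in{\E}(n,A_0)$ via Lemma~\ref{lemm 26}, and (ii) normality of ${\E}(n,A)$. The only difference is presentational: the paper writes out an explicit factorization
\[
[\alpha,\beta]=(aba^{-1}b^{-1})\bigl(bab^{-1}\alpha^+(0)ba^{-1}b^{-1}\bigr)\bigl(ba\,\beta^+(0)\alpha^+(0)^{-1}a^{-1}b^{-1}\bigr)\bigl(b\,\beta^+(0)^{-1}b^{-1}\bigr)
\]
with $a=\alpha\alpha_0^{-1}$, $b=\beta\beta_0^{-1}$, and then observes that the last three factors are conjugates of elements of ${\E}(n,A)$; your passage to ${\G}(n,A)/{\E}(n,A)$ encodes exactly this rearrangement in one line. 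Your added verification that $\alpha_0\in{\es}(n,A_0)$ (by reading off the degree-$0$ component of the defining relation) is a point the paper leaves implicit, so including it is a small improvement rather than a deviation.
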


\begin{proof}
Since $A_0$ is semilocal, we have ${\es}(n, A_0) = {\E}(n, A_0)$ by Lemma \ref{lemm 26}. Hence $\alpha^+(0), 
\beta^+(0) \in {\E}(n, A_0)$. Let $a = \alpha {\alpha^+(0)}^{-1}$ and $b = \beta {\beta^+(0)}^{-1}$. Then 
\begin{align*}
    [\alpha, \beta ] & =  [\alpha {\alpha^+(0)}^{-1}\alpha^+(0), \beta {\beta^+(0)}^{-1} \beta^+(0) ]\\
         &  = a \alpha^+(0) b \beta^+(0) {\alpha^+(0)}^{-1} a^{-1} {\beta^+(0)}^{-1} b^{-1}\\
& = \big(aba\!^{-1}b\!^{-1}\big) \big(bab\!^{-1}\alpha^+(0) ba\!^{-1}b\!^{-1} \big)
\big( ba \beta^+(0) {\alpha^+(0)}\!^{-1}a\!^{-1}b^{-1} \big) \big(b {\beta^+(0)}^{-1}b\!^{-1} \big).
\end{align*}
Since ${\E}(n, A)$ is a normal subgroup of ${\es}(n, A)$, the elements $bab^{-1}\alpha^+(0) ba^{-1}b^{-1}$, 
$ba \beta^+(0) {\alpha^+(0)}^{-1}a^{-1}b^{-1}$ and $ b {\beta^+(0)}^{-1}b^{-1}$ are in ${\E}(n, A).$ 

\end{proof}

\begin{co} \label{cor 21} Let  $\alpha \in [{\es}(n, A), {\es}(n, A)]$ with $\alpha^+(0) = {\I}_n,$ and 
let $A_0$ be a semilocal commutative ring. Then using the normality property of the elementary $($resp. elementary symplectic$)$ subgroup 
$\alpha$ can be written as $$\underset{k=1}{\overset{t}\Pi} \; 
[\beta_k, \gamma_k ]\; \epsilon ,$$ for some $t\ge 1$, and $\beta_k, \gamma_k \in {\es}(n, A)$, with $\beta_{k}^+(0) = \gamma_{k}^+(0) = 
{\I}_n$, and $\epsilon \in {\E}(n, A)$ with $\epsilon^+(0) = {\I}_n$.
\end{co}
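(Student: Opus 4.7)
The plan is to start from a generic presentation of $\alpha$ as a product of commutators in ${\es}(n,A)$, apply Lemma \ref{lemm 21} termwise to replace each commutator factor by one whose arguments vanish at $0$ (plus an elementary error), and then collect all the elementary errors to one side using the Adjust Lemma (Lemma \ref{adjust}) together with the normality of ${\E}(n,A)$ in ${\G}(n,A)$.

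Concretely, I would first write $\alpha = \prod_{k=1}^{t}[g_k,h_k]$ with $g_k, h_k \in {\es}(n,A)$. Since $A_0$ is semilocal, Lemma \ref{lemm 21} applies termwise and gives $[g_k,h_k] = [\beta_k,\gamma_k]\,e_k$, where $\beta_k = g_k\,(g_k^{+}(0))^{-1}$, $\gamma_k = h_k\,(h_k^{+}(0))^{-1}$ (so $\beta_k^{+}(0) = \gamma_k^{+}(0) = {\I}_n$) and $e_k \in {\E}(n,A)$. Substituting yields $\alpha = \prod_{k=1}^{t}[\beta_k,\gamma_k]\,e_k$. I would then apply Lemma \ref{adjust} with $a_k = [\beta_k,\gamma_k]$, $b_k = e_k$, and $J_k = \prod_{j=1}^{k}[\beta_j,\gamma_j] \in {\es}(n,A)$, to obtain
$$\alpha = \Bigl(\prod_{k=1}^{t} J_k\,e_k\,J_k^{-1}\Bigr)\,\prod_{k=1}^{t}[\beta_k,\gamma_k].$$
By the well-known normality of ${\E}(n,A)$ in ${\G}(n,A)$ (in the prescribed ranges of $n$), each conjugate $J_k e_k J_k^{-1}$ lies in ${\E}(n,A)$, hence so does $\epsilon_0 := \prod_k J_k e_k J_k^{-1}$. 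To move the elementary factor to the right, as required by the statement, set $C = \prod_{k=1}^{t}[\beta_k,\gamma_k]$ and $\epsilon := C^{-1}\epsilon_0\,C$; then $\alpha = C\,\epsilon$ and $\epsilon \in {\E}(n,A)$ by one more appeal to normality.

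It remains to verify $\epsilon^{+}(0) = {\I}_n$. Since the evaluation $(-)^{+}(0)$ is a group homomorphism on each classical group (it arises from the ring homomorphism $A \to A_0$ sending $A_+$ to $0$), applying it to $\alpha = C\,\epsilon$ and using $\beta_k^{+}(0) = \gamma_k^{+}(0) = {\I}_n$ collapses every commutator factor of $C$ to $[{\I}_n,{\I}_n] = {\I}_n$, giving $\alpha^{+}(0) = \epsilon^{+}(0)$; the hypothesis $\alpha^{+}(0) = {\I}_n$ then forces $\epsilon^{+}(0) = {\I}_n$, completing the proof. The only substantive ingredient beyond Lemma \ref{lemm 21} is the invocation of normality of ${\E}(n,A)$ in ${\G}(n,A)\supseteq {\es}(n,A)$; this is precisely the hypothesis flagged by the phrase ``using the normality property'' in the statement, and once granted the entire argument reduces to bookkeeping.
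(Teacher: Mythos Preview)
Your proof is correct and follows essentially the same approach as the paper's: write $\alpha$ as a product of commutators, apply Lemma~\ref{lemm 21} termwise to replace each $[g_k,h_k]$ by $[\beta_k,\gamma_k]\,e_k$ with $\beta_k^{+}(0)=\gamma_k^{+}(0)={\I}_n$, then use normality of ${\E}(n,A)$ to sweep the elementary factors into a single $\epsilon$ on the right and read off $\epsilon^{+}(0)={\I}_n$ from $\alpha^{+}(0)={\I}_n$. The paper's proof is terser---it simply asserts ``this gives $\alpha=\prod[\beta_k,\gamma_k]\,\epsilon$'' without naming the mechanism---whereas you make the collecting step explicit via Lemma~\ref{adjust} (obtaining $\epsilon_0$ on the left) followed by one further conjugation to place $\epsilon$ on the right; this is a slight detour but entirely valid, and your verification of $\epsilon^{+}(0)={\I}_n$ via the homomorphism $(-)^{+}(0)$ is exactly what the paper has in mind.
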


\begin{proof}
Since $\alpha \in [{\es}(n, A), {\es}(n, A)]$, $\alpha = \underset{k=1}{\overset{t}\Pi} [a_k, b_k]$ for some $t\ge 1$. 
Using Lemma \ref{lemm 21} we identify $\beta_k$ with $a_k {{a_k}^+(0)}^{-1}$ and 
$\gamma_k$  with $b_k{{b_k}^+(0)}^{-1}$. This gives $$ \alpha = \underset{k=1}{\overset{t}\Pi}\; [\beta_k, \gamma_k ]\; \epsilon.$$
Then it follows that $\epsilon^+(0) = {\I}_n$, as $\alpha^+(0) = {\I}_n$.
\end{proof}

The next lemma gives a variant of ``Dilation Lemma'' (Lemma \ref{lemm 24}) 
\underline{ for ${\es}(n, A)$}; the special linear (resp. symplectic) group.

\begin{lm} \label{lemm 22} If $\alpha \in {\es}(n, A_s)$ with $\alpha^+(0) = {{\I}_n}$, then $$\alpha^+(b + d){\alpha^+(d)}^{-1} \in {\es}(n, A),$$ 
where $s$ is a non-zero divisor and $b, d\in A_0$ with $b = s^i$ for some $i \gg 0$.
\end{lm}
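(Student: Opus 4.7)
The plan is to transplant the proof of Lemma \ref{lemm 12} verbatim to the present setting, replacing the invocation of the Dilation Lemma for $\E(n,A)$ (Theorem \ref{thm 22}, part~(\ref{Prop 13})) by its $\es$-version (Lemma \ref{lemm 24}). All the auxiliary facts that will be required---preservation of $\es$ under the Swan--Weibel $+$-operation (the first half of Lemma \ref{lemm 24}), together with the injectivity of $s$-localization on suitable pullbacks furnished by Lemma \ref{lemm 13}---are already in place.

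First, I will set
\[
\beta^+(X) := \alpha^+(X + d)\,\alpha^+(d)^{-1} \in \G(n, A[X]).
\]
Because $\alpha_0 = \I_n$, the evaluation at $X = 0$ gives $\beta^+(0) = \alpha^+(d)\alpha^+(d)^{-1} = \I_n$, and localizing at $s$ yields $\beta_s^+(X) \in \es(n, A_s[X])$ since both $\alpha_s^+(X+d)$ and $\alpha_s^+(d)$ lie in $\es(n, A_s[X])$ (preservation of $\es$ under $+$). I will then apply Lemma \ref{lemm 24} over the polynomial ring $A[X]$, equipped with the natural extension of the $A$-grading, to the matrix $\beta^+(X)$: this produces $\widetilde{\beta}(X) \in \es(n, A[X])$ whose $s$-localization agrees with $\beta_s^+(bX) = \alpha_s^+(bX + d)\,\alpha_s^+(d)^{-1}$, for some $b = s^l$ with $l$ chosen large enough both to clear all denominators in $\beta_s^+$ and to invoke the injectivity supplied by Lemma \ref{lemm 13}. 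Specializing at $X = 1$ will give $\widetilde{\beta}(1) \in \es(n, A)$ with $\widetilde{\beta}(1)_s = \alpha_s^+(b + d)\,\alpha_s^+(d)^{-1}$. Since $\alpha^+(b + d)\alpha^+(d)^{-1}$ is already a well-defined element of $\G(n, A)$ with the same $s$-localization, a last application of Lemma \ref{lemm 13} will identify the two, giving the desired membership in $\es(n, A)$.

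The step I expect to be the main obstacle is the bookkeeping required when invoking Lemma \ref{lemm 24} over $A[X]$ rather than $A$: one must fix the grading on $A[X]$ so that the condition $\beta^+(0) = \I_n$ corresponds precisely to $\beta^+(X)$ being $\I_n$ modulo $(A[X])_+$, and so that the substitution $X \mapsto bX$ appearing in the conclusion matches the action of the graded $+$-operation on the relevant polynomial-ring extension. Once these conventions are arranged---exactly as they are implicitly arranged in the proof of Lemma \ref{lemm 12}---the remainder of the argument is formally identical to that proof.
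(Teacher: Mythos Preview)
Your proposal is correct and follows essentially the same route as the paper's own proof: define $\beta^+(X)=\alpha^+(X+d)\,\alpha^+(d)^{-1}$, observe $\beta^+(0)=\I_n$ and $\beta_s^+(X)\in\es(n,A_s[X])$, invoke Lemma~\ref{lemm 24} (in place of Theorem~\ref{thm 22}\,(\ref{Prop 13})) to obtain an element in $\es(n,A[X])$ localizing to $\beta_s^+(bX)$, and specialize $X=1$. Your added care about the grading on $A[X]$ and the explicit appeal to Lemma~\ref{lemm 13} for the final identification only make rigorous what the paper leaves implicit.
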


\begin{proof}
Let $\alpha^+(X) \in {\G}(n, A[X])$, then $\alpha_s^+(X) \in {\E}(n, A_s[X])$. 
Let $\beta^+(X) = \alpha^+(X + d) {\alpha^+(d)}^{-1}$. Then $\beta_s^+(X) \in {\es}(n, A_s[X])$. 
Hence by Lemma \ref{lemm 24}, $\beta^+(bX) \in {\es}(n, A)$. Putting $X = 1$, 
we get  result.
\end{proof}

The following lemma makes use of Lemma \ref{lemm 24} to deduce ``Dilation Lemma'' for the commutator subgroup {$[{\es}(n, A), {\es}(n, A)]$}.
\begin{lm} \label{lemm 23} Let $\alpha_s = \underset{i=1}{\overset{t}\Pi} [{\beta_i}_s, {\gamma_i}_s] \epsilon_s$ for some $t\ge 1$, 
such that ${\beta_i}_s$ and ${\gamma_i}_s \in {\es}(n, A_s)$ and $\epsilon_s \in {\E}(n, A_s)$ 
with $\gamma_{i_{s}}^+(0) = \beta_{i_{s}}^+(0) = {\epsilon_s}^+(0) = {\I}_n$. 
Then $($by identifying with its pullback by using Lemma \ref{lemm 13}$)$ 
$$\alpha_s^+(b+d)\alpha_s^+(d)^{-1} \in [{\es}(n, A), {\es}(n, A)],$$ 
where $b, d \in A_0$ with $b=s^l$ for some $l\gg 0$.
\end{lm}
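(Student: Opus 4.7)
The plan is to apply the Swan--Weibel construction with a formal variable $X$, then separate the dilation $\widetilde{\alpha}(X) := \alpha_s^+(X+d)\,\alpha_s^+(d)^{-1}$ into a ``pure commutator'' piece $R(X)$ and a ``purely elementary'' piece $W(X)$, each handled by the dilation lemmas already proved for ${\es}$ and ${\E}$.

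Since the $+$-operator is multiplicative, $\alpha_s^+(X) = \mathcal{C}_0^+(X)\,\epsilon_s^+(X)$ in ${\es}(n,A_s[X])$, where $\mathcal{C}_0 := \prod_{i=1}^t [\beta_{i,s}, \gamma_{i,s}]$ and $\mathcal{C}_0^+(0) = \epsilon_s^+(0) = {\I}_n$ by hypothesis. I would split
$$\widetilde{\alpha}(X) = \underbrace{\mathcal{C}_0^+(X+d)\,\mathcal{C}_0^+(d)^{-1}}_{R(X)} \cdot \underbrace{\mathcal{C}_0^+(d)\,[\epsilon_s^+(X+d)\,\epsilon_s^+(d)^{-1}]\,\mathcal{C}_0^+(d)^{-1}}_{W(X)},$$
both factors satisfying $R(0) = W(0) = {\I}_n$. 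The inner term of $W$ lies in ${\E}(n,A_s[X])$ and is conjugated by $\mathcal{C}_0^+(d) \in {\es}(n,A_s)$, so normality (Theorem \ref{thm 22}, part (\ref{Prop 11})) places $W(X)$ in ${\E}(n, A_s[X])$; Lemma \ref{lemm 12} together with Lemma \ref{lemm 13} then deliver $W(b) \in {\E}(n, A) \subseteq [{\es}(n,A), {\es}(n,A)]$.

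For $R(X)$, multiplicativity of $+$ gives $R(X) = \prod_{i=1}^t [\widetilde{\beta}_i(X)\,B_i,\,\widetilde{\gamma}_i(X)\,G_i] \cdot \prod_{j=t}^{1}[B_j, G_j]^{-1}$, with $\widetilde{\beta}_i(X) := \beta_{i,s}^+(X+d)\beta_{i,s}^+(d)^{-1}$, $\widetilde{\gamma}_i(X) := \gamma_{i,s}^+(X+d)\gamma_{i,s}^+(d)^{-1}$, and $B_i := \beta_{i,s}^+(d)$, $G_i := \gamma_{i,s}^+(d)$. Each $\widetilde{\beta}_i, \widetilde{\gamma}_i$ has value ${\I}_n$ at $X = 0$, so Lemma \ref{lemm 22} lifts $\widetilde{\beta}_i(b), \widetilde{\gamma}_i(b)$ to ${\es}(n,A)$. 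I would then apply the commutator identities $[ab,c] = a[b,c]a^{-1}[a,c]$ and $[a,bc] = [a,b]\,b[a,c]b^{-1}$ to expand each $[\widetilde{\beta}_i B_i, \widetilde{\gamma}_i G_i]$, and use Lemma \ref{adjust} to telescope and regroup the product, rewriting
$$R(X) = \prod_{k} [\mu_k(X),\,\nu_k(X)] \cdot W'(X),$$
where each $\mu_k, \nu_k$ is a word in the $\widetilde{\beta}_i, \widetilde{\gamma}_i$ with $\mu_k(0) = \nu_k(0) = {\I}_n$ and $W'(X) \in {\E}(n,A_s[X])$ with $W'(0) = {\I}_n$. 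Specialising at $X = b$, Lemma \ref{lemm 22} places each $\mu_k(b), \nu_k(b)$ in ${\es}(n,A)$ and Lemma \ref{lemm 12} places $W'(b)$ in ${\E}(n,A)$; hence $R(b) \in [{\es}(n,A), {\es}(n,A)]$ and $\widetilde{\alpha}(b) = R(b)W(b)$ lies there as well.

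The main obstacle is the rearrangement producing the decomposition $R(X) = \prod_k [\mu_k,\nu_k]\,W'(X)$. Naively expanding $[\widetilde{\beta}_i B_i, \widetilde{\gamma}_i G_i]$ yields a cross term $(\widetilde{\beta}_i\widetilde{\gamma}_i)\,[B_i, G_i]\,(\widetilde{\beta}_i\widetilde{\gamma}_i)^{-1}$ whose value at $X = 0$ is $[B_i, G_i]$ rather than ${\I}_n$; these must be telescoped against $\prod_j [B_j,G_j]^{-1}$ to produce honest commutators of ``trivial-at-$0$'' elements (of the shape $[\widetilde{\beta}_i\widetilde{\gamma}_i,\,[B_i,G_i]]$), while the residual conjugations of elementary-with-trivial-constant-term factors by the $A_s$-constants $B_i, G_i$ have to be absorbed into $W'(X)$ through repeated appeals to the normality of ${\E}$ in ${\es}$. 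This bookkeeping is the graded-ring analogue of the polynomial-ring calculation carried out in \cite{rrr}.
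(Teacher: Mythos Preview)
Your approach is essentially the same as the paper's. The paper is terser: it reduces without loss of generality to $t=1$, writes out $\alpha_s^+(b+d)\alpha_s^+(d)^{-1}$ directly (no auxiliary variable $X$), moves the elementary piece $\epsilon_s^+(b+d)\epsilon_s^+(d)^{-1}$ out of the way by normality, and then simply asserts that ``by rearranging \ldots\ we can consider $\gamma_s^+(b+d)\gamma_s^+(d)^{-1}$ and $\beta_s^+(b+d)\beta_s^+(d)^{-1}$ together,'' invoking Lemma~\ref{lemm 22} to place these in ${\es}(n,A)$. The commutator bookkeeping you spell out with the identities $[ab,c]=a[b,c]a^{-1}[a,c]$, $[a,bc]=[a,b]\,b[a,c]b^{-1}$ and Lemma~\ref{adjust}, and the obstacle you flag (cross terms involving the constants $B_i,G_i$ that must telescope against $\prod_j[B_j,G_j]^{-1}$), are exactly what the paper compresses into that one phrase; your version is a more explicit unrolling of the same argument rather than a different route.
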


\begin{proof} Without loss of generality (as we would conclude by the end of the proof),
we can assume $$\alpha_s = [\beta_s, \gamma_s] \epsilon_s = \beta_s \gamma_s {\beta_s}^{-1} {\gamma_s}^{-1} \epsilon_s.$$
Since $\beta_{s}^+(0) = {\I}_n$, by Lemma \ref{lemm 24} ${\beta_s}^+(b), {\gamma_s}^+(b) \in {\es}(n, A)$ for some $b=s^l$ with $l\gg 0$. Also,  
${\epsilon_s}^+(b) \in {\E}(n, A)$ by (\ref{Prop 14}) of Theorem \ref{thm 22}.
Hence

\begin{align*}
\alpha_s^+(b+d){\alpha_s^+(d)}^{-1}   = &  \; \beta_s^+(b+d) \gamma_s^+(b+d) {\beta_s^+(b+d)}^{-1} {\gamma_s^+(b+d)}^{-1}  \\ 
& \epsilon_s^+(b+d) {\epsilon_s^+(d)}^{-1} \gamma_s^+(d)\beta_s^+(d) {\gamma_s^+(d)}^{-1} {\beta_s^+(d)}^{-1}. 
\end{align*}
Since ${\E}(n, A_s)$ and ${\es}(n, A_s)$ are normal subgroups in ${\G}(n, A_s)$, by rearranging (using $\epsilon_s^+(b+d){\epsilon_s^+(d)}^{-1}$ 
as intermediary) we can consider 
$\gamma_s^+(b+d) {\gamma_s^+(d)}^{-1}$ and $ \beta_s^+(b+d){\beta_s^+(d)}^{-1}$ together. Now  by using Lemma \ref{lemm 22} 
we get $\gamma_s^+(b+d) {\gamma_s^+(d)}^{-1}$ and $ \beta_s^+(b+d){\beta_s^+(d)}^{-1} \in {\es}(n, A)$. 
Hence $($by identifying with its pullback by using Lemma \ref{lemm 13}$)$  it follows that $$\alpha_s^+(b + d) {\alpha_s^+(d)}^{-1} \in [{\es}(n, A), {\es}(n, A)].$$
\end{proof}

Now we deduce the graded version of the Local-Global Principle for the commutators subgroups.
\begin{tr}\label{thm 21} Let $\alpha \in {\es}(n, A)$ with $\alpha^+(0) = {\I}_n$. 
If $$\alpha_\mathfrak{p} \in [{\es}(n, A_\mathfrak{p}), {\es}(n, A_\mathfrak{p})] \text{ for all } \mathfrak{p} \in {\rm Spec (A_0)},$$
then $$\alpha \in [{\es}(n, A), {\es}(n, A)].$$ 
\end{tr}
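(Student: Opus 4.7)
The plan is to mimic the patching argument used to prove $(\ref{Prop 13}) \Rightarrow (\ref{Prop 12})$ in Theorem \ref{thm 22}, with the commutator-flavoured dilation supplied by Lemma \ref{lemm 23} in place of the ordinary one. First, for each maximal ideal $\mathfrak{m}$ of $A_0$, the local ring $A_{0,\mathfrak{m}}$ is (semi)local, so Corollary \ref{cor 21} applies to $\alpha_\mathfrak{m} \in [\es(n, A_\mathfrak{m}), \es(n, A_\mathfrak{m})]$ and furnishes $\beta_{k,\mathfrak{m}}, \gamma_{k,\mathfrak{m}} \in \es(n, A_\mathfrak{m})$ with $\beta_{k,\mathfrak{m}}^+(0) = \gamma_{k,\mathfrak{m}}^+(0) = \I_n$ together with $\epsilon_\mathfrak{m} \in \E(n, A_\mathfrak{m})$, $\epsilon_\mathfrak{m}^+(0) = \I_n$, such that
$$\alpha_\mathfrak{m} \;=\; \prod_{k=1}^{t} [\beta_{k,\mathfrak{m}}, \gamma_{k,\mathfrak{m}}]\,\epsilon_\mathfrak{m}.$$
Only finitely many denominators appear in this equality, so it already holds as $\alpha_s = \prod_k [(\beta_k)_s, (\gamma_k)_s]\,\epsilon_s$ over $A_s$ for some $s = s(\mathfrak{m}) \in A_0 \setminus \mathfrak{m}$; since $A$ is Noetherian, zero divisors in $A_0$ are confined to a finite union of associated primes, so $s(\mathfrak{m})$ may be taken to be a non-zero divisor.

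Next, by quasi-compactness of $\sp(A_0)$ finitely many of these $s_i = s(\mathfrak{m}_i)$ already generate the unit ideal; passing to suitable powers, I obtain $b_i = s_i^{l_i}$ with $b_1 + \cdots + b_r = 1$. Using the hypothesis $\alpha^+(0) = \I_n$ together with $\alpha^+(1) = \alpha$, I would telescope
\begin{align*}
\alpha \;=\; \alpha^+(1)\,\alpha^+(0)^{-1}
&= \alpha^+(b_1 + \cdots + b_r)\,\alpha^+(0)^{-1} \\
&= \prod_{i=1}^{r} \alpha^+(b_i + \cdots + b_r)\,\alpha^+(b_{i+1} + \cdots + b_r)^{-1}.
\end{align*}
For the $i$-th factor, set $b = b_i = s_i^{l_i}$ and $d = b_{i+1} + \cdots + b_r$. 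Viewed after further localisation at the remaining $s_j$'s, this factor becomes $\alpha_{s_i}^+(b+d)\,\alpha_{s_i}^+(d)^{-1}$, and the commutator decomposition of $\alpha_{s_i}$ obtained above puts it into precisely the hypothesis of Lemma \ref{lemm 23}. That lemma therefore places each factor in $[\es(n, A), \es(n, A)]$, once the identification with its natural pullback is made legitimate by Lemma \ref{lemm 13} and the compatibility recorded in Remark \ref{rmk 001}.

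Assembling the $r$ factors and applying Lemma \ref{lemm 13} once more to read the product back from $A_{s_1 \cdots s_r}$ to $A$, I conclude $\alpha \in [\es(n, A), \es(n, A)]$. The main obstacle is the bookkeeping around the simultaneous use of Lemma \ref{lemm 13}, the telescoping identity and the multi-variable pullback square: one must verify not only that each telescoping piece lies in $\es(n,A)$ but that the precise commutator structure survives every localisation-pullback exchange, and that the normalisation $\gamma^+(0) = \beta^+(0) = \epsilon^+(0) = \I_n$ is inherited by every intermediate piece so that Lemma \ref{lemm 23} can be invoked at each step.
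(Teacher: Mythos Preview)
Your proposal is correct and follows essentially the same route as the paper: pass from the local hypothesis to a decomposition over $A_s$ via Corollary~\ref{cor 21}, choose finitely many $s_i$ covering ${\rm Spec}(A_0)$, telescope $\alpha=\alpha^+(1)$ along the partition $b_1+\cdots+b_r=1$, apply Lemma~\ref{lemm 23} to each telescoping factor, and use Lemma~\ref{lemm 13} together with Remark~\ref{rmk 001} to justify the pullbacks. Your write-up is in fact slightly tidier than the paper's, which interposes an observation about $\alpha_s^+(b)$ lying in $[\es(n,A),\es(n,A)]\,\E(n,A)$ that is not actually used in the final telescoping step.
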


\begin{proof} Since $\alpha_\mathfrak{p} \in [{\es}(n, A_\mathfrak{p}), {\es}(n, A_\mathfrak{p})] \subset [{\es}(n, A_\mathfrak{m}), {\es}(n, A_\mathfrak{m})]$, 
we have for a maximal ideal $\mathfrak{m} \supseteq  \mathfrak{p}$,  $s \in A_0 - \mathfrak{m}$ 
such that $$\alpha_s \in [{\es}(n, A_s), {\es}(n, A_s)],$$  hence $\alpha_s$ can be decomposed as 
$$\alpha_s = \underset{i=1}{\overset{t}\Pi} [{\beta_{i_s}}, {\gamma_{i_s}}]\epsilon_s$$ by Corollary 
\ref{cor 21}, where $\beta_{i_s}, \gamma_{i_s} \in {\es}(n, A_s)$ and $\epsilon_s \in {\E}(n, A_s)$.
Now using the ``Dilation Lemma'' (Lemma \ref{lemm 24} for ${\es}(n, A_s)$ and theorem \ref{thm 22} $(4)$ for 
${\E}(n, A_s)$) on each of these elements, we have for $b = s^i, i \gg 0$ $$\alpha_s^+(b) = 
\underset{i=1}{\overset{t}\Pi} [\beta_{i_s}^+(b), \gamma_{i_s}^+(b)] \epsilon_s^+(b) \in [{\es}(n, A), {\es}(n,A)]E(n, A).$$ 
Since ${\E}(n, A_s) \subseteq [{\es}(n, A_s), {\es}(n, A_s)]$, we have $\alpha_s \in [{\es}(n, A_s), {\es}(n, A_s)]$. 
Let $s_1, \ldots, s_r \in A_0$ non-zero divisors such that $s_i \in A_0 - \mathfrak{m}_i$ and $\langle s_1, \ldots, s_r \rangle = A_0$. Then 
it follows that $b_1 + \cdots + b_r = 1$ for suitable (as before) $b_i \in (s_i)$; and $i=1,\ldots r$. Now
\begin{align*} \alpha_{s_1s_2\ldots s_r} = \; & \alpha_{s_1s_2\ldots s_r}^{+}(1)
 \\
 = \; & \alpha_{s_1s_2\ldots s_r}^+(b_1 + b_2 + \cdots + b_r) \\ = 
 \; & {{\big((\alpha_{s_1})}_{s_2s_3s_4\ldots}}\big)^+(b_1 + \cdots+ b_r) 
 {{\big((\alpha_{s_1})}_{s_2s_3s_4\ldots}}\big)^+(b_2 + \cdots + b_r)^{-1} \!\!\cdots \\
  & {\big((\alpha_{s_i})}_{s_1 \ldots \hat{s_i} \ldots s_r}\big)^{+}\!\!(b_i + \cdots + b_r){\big((\alpha_{s_i})}_{s_1 \ldots 
  \hat{s_i} \ldots s_r}\big)^{+}\!\!(b_{i+1} + \cdots + b_r)^{-1} \\
 & \cdots{{\big((\alpha_{s_r})}_{s_1s_2\ldots s_{r-1}}}\big)^{+}\!\!(b_r){{\big((\alpha_{s_r})}_{s_1s_2\ldots s_{r-1}}}\big)^{+}\!\!(0)^{-1}.
\end{align*} 
Using Lemma \ref{lemm 13} the product is well defined (see Remark \ref{rmk 001}) and using Lemma \ref{lemm 23}, we conclude that $\alpha \in [{\es}(n, A), {\es}(n, A)].$
\end{proof}

\section{Auxiliary Result for Transvection subgroup}
Let $R$ be a commutative ring with $1$. We recall that a finitely generated projective $R$-module $Q$ has a unimodular element 
if their exists  $q \in Q$ such that $qR \simeq R$ and $Q \simeq qR \oplus P$ for some projective $R$-module $P$.

In this section, we shall consider three types of (finitely generated) classical modules;  {\it viz.} projective, 
symplectic and orthogonal modules over graded rings. Let $A = \bigoplus_{i = 0}^{\infty} {A_i}$ be a commutative graded Noetherian ring with identity. 
For definitions and related facts we refer 
\cite{BBR}; \$1, \$2. In that paper, the first author  with A. Bak and R.A. Rao has established an
analogous Local-Global Principle for the elementary transvection subgroup of the
automorphism group of projective, symplectic and orthogonal modules of global
rank at least 1 and local rank at least 3. In this article we deduce an analogous statement for the above classical groups over graded rings. 
We shall assume for every maximal ideal $\mathfrak{m}$ of $A_0$, the  symplectic and orthogonal module $Q_\mathfrak{m} \simeq (A_0^{2m})_{\mf{m}}$ 
with the standard form (for suitable integer $m$).

\begin{re} {\rm By definition the global rank or simply rank of a finitely generated projective $R$-module 
(resp. symplectic or orthogonal $R$-module) is the largest integer $k$ such that $\oplus^{k}R$ 
(resp. $\perp^k$ $\mathbb{H}(R)$) is a direct summand (resp. orthogonal summand) of the module. Here
$\mathbb{H}(R)$ denotes the hyperbolic plane.}
\end{re}
Let $Q$ denote a projective, symplectic or orthogonal $A_0$-module of global rank 
$\ge 1$, and total (or local) 
rank $r + 1$ in the linear case and $2r+ 2$ otherwise and let $Q_1 = Q \otimes_{A_0} A$.\vp

We use the following  notations to deal with the above three classical modules uniformly.
\begin{enumerate}
\item ${\G}(Q_1)$ := the full automorphism group of $Q_1$. \vp 
\item ${\T}(Q_1)$ := the subgroup generated by transvections of $Q_1$.\vp 
\item ${\ET}(Q_1)$ := the subgroup generated by elementary-transvections of $Q_1$.\vp
\item $\widetilde{\E}(r, A)$ := ${\E}(r+1, A)$ for linear case and ${\E}(2r+2,A)$ otherwise.
\end{enumerate} \vp

In \cite{BBR}, the first author with Ravi A. Rao and A. Bak established the ``Dilation Lemma'' and the ``Local-Global Principle'' for 
the transvection subgroups of the automorphism 
group over polynomial rings. In this section, we generalize their results over graded rings. For the statements over polynomial rings 
we request the reader to look at Proposition 3.1 and Theorem 3.6 in \cite{BBR}. \vp

Following is the graded version of the  ``Dilation Lemma''.

\begin{pr}[Dilation Lemma]\label{thm 41} Let $Q$ be a projective $A_0$-module, and $Q_1 = Q \otimes_{A_0} A$. 
Let $s$ be a non-nilpotent element in $A_0$. Let $\alpha \in G(Q_1)$ with $\alpha^+(0) = {\I}_n$. Suppose
\[ \alpha_s \in
\begin{cases}
{\E}\big(r + 1, (A_0)_s\big) & \text{ in the linear case,}\\
{\E}\big(2r + 2, (A_0)_s\big) & \text{ otherwise. }
\end{cases}
 \]
Then there exists $\widetilde{\alpha} \in {\rm ET}(Q_1)$ and $l \gg 0$ such that $\widetilde{\alpha}$ 
localizes at $\alpha^+(b)$ for some $b = (s^l)$ and $\widetilde{\alpha}^+(0) = {\I}_n$.

\end{pr}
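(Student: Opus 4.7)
The plan is to reduce the assertion to the polynomial ring Dilation Lemma of Bak--Basu--Rao (Proposition 3.1 of \cite{BBR}) via Swan--Weibel's homotopy trick, essentially ``importing'' the variable that BBR's proof patches through into $A$ via the grading.

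First, I would apply the Swan--Weibel homomorphism $\epsilon: A \to A[X]$ to the given $\alpha \in \G(Q_1)$, producing $\alpha^+(X) \in \G(Q_1[X])$, where $Q_1[X] = Q \otimes_{A_0} A[X]$. The hypothesis $\alpha^+(0) = \I_n$ says precisely that $\alpha^+(X)$, viewed as a polynomial in $X$ over $A$, has constant term $\I_n$. Moreover, the hypothesis $\alpha_s \in \E(\cdots,A_s)$ passes through $\epsilon$ and localization: on an elementary generator $ge_{ij}(z)$ with $z = z_0+z_1+\cdots \in A_s$, the map $\epsilon$ yields $ge_{ij}(z^+(X))$, so we obtain $\alpha_s^+(X) = \alpha^+(X)_s \in \E(\cdots,A_s[X])$.

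Second, I would invoke Proposition 3.1 of \cite{BBR} (the polynomial-ring Dilation Lemma) applied to $\alpha^+(X) \in \G(Q_1[X])$ over the base ring $A$. This produces $l \gg 0$ and $\widetilde{\beta}(X) \in \ET(Q_1[X])$ satisfying $\widetilde{\beta}(0) = \I_n$ and $\widetilde{\beta}(X)_s = \alpha_s^+(s^l X)$.

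Third, specialize $X = 1$. Because $\ET$ is functorial under the specialization $A[X] \to A,\; X\mapsto 1$, the element $\widetilde{\alpha} := \widetilde{\beta}(1)$ lies in $\ET(Q_1)$, and its localization is $\widetilde{\alpha}_s = \alpha_s^+(s^l) = \alpha^+(b)_s$ with $b = s^l$, which is the claim that $\widetilde{\alpha}$ localizes at $\alpha^+(b)$. The condition $\widetilde{\alpha}^+(0) = \I_n$ is then extracted as follows: since $\widetilde{\beta}(0) = \I_n$, the elementary transvection factors of $\widetilde{\beta}(X)$ can be arranged to have parameters in $X \cdot A[X]$, so after setting $X=1$ and then applying a fresh Swan--Weibel evaluation $Y\mapsto 0$, every such parameter specializes to its $A_0$-component, which matches that of $\alpha^+(b)$ after localization; an appeal to Lemma \ref{lemm 13} (injectivity of $\G(n,s^k A) \to \G(n,A_s)$) then forces equality with $\I_n$ globally.

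The main obstacle will be the bookkeeping at the final step: confirming that the structural hypotheses of BBR's polynomial lemma (projectivity with a unimodular element in $Q$, the hyperbolic-summand structure in the symplectic and orthogonal cases, and the rank bounds) transfer intact from $A_0$ to $A[X]$ via the base change $Q \leadsto Q_1 \leadsto Q_1[X]$, and that the two uses of Swan--Weibel (one encoding the grading of $A$, one implicit in the polynomial extension $A[X]$) commute in the way needed to identify $\widetilde{\beta}(1)^+(0)$ with $\I_n$ cleanly. All other steps are formal manipulations with $\epsilon$ and the identity $\big(b^+(x)\big)^+(y) = b^+(xy)$ noted after the definition of the homotopy trick.
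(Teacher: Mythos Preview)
Your proposal is correct and follows essentially the same route as the paper: apply the Swan--Weibel map to pass from $\alpha$ to $\alpha^+(X)\in \G(Q_1[X])$, invoke the polynomial Dilation Lemma of \cite{BBR} (Proposition~3.1) over the base ring $A$, and then specialize $X=1$. The paper's own proof is in fact terser than yours---it does not separately verify the condition $\widetilde{\alpha}^+(0)=\I_n$, simply stopping after the specialization---so your additional bookkeeping in the third step and your caveats about the transfer of structural hypotheses go beyond what the paper records.
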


\begin{proof}. Let $\alpha \in {\rm G}(Q_1)$ and $\alpha(X) := \alpha^+(X) \in {\rm G}(Q_1[X])$. 
Since $\alpha_s \in \widetilde{{\E}}(r, A_s)$, $\alpha_s(X) \in \widetilde{{\E}}(r, A_s[X])$. Since $\alpha(0) := 
\alpha^+(0) = {\I}_n$, we can apply ``Dilation Lemma'' for the ring $A[X]$ ({\it cf.}\cite{BBR}, Proposition 3.1) and hence there exists 
$\widetilde{\alpha}(X) \in {\ET}(Q_1[X])$ such that $\widetilde{\alpha}_s(X) = \alpha_s^+(bX)$ for some
$b\in (s^l)$, $l \gg 0$. Substituting $X = 1$, we get $\widetilde{\alpha} \in {\rm ET}(Q_1)$ and 
$\widetilde{\alpha}_s := \widetilde{\alpha}_s(1) = \alpha_s(b) = \alpha_s^+(b)$. This proves the proposition.  
\end{proof}

\begin{lm} If $\alpha \in {\G}(Q_1)$ with $\alpha^+(0) = {\I}_n$ such that $\alpha_s \in {\E}(n, A_s)$, 
then $\alpha^+(b + d)\alpha^+(d)^{-1} \in {\rm ET}(Q_1)$ for some $b\in (s^l)$ with $l \gg 0$, and $d\in A_0$.
\end{lm}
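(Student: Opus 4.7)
The plan is to mimic the strategy of Lemma \ref{lemm 12} in the transvection setting, by passing to a polynomial extension and then invoking the ``Dilation Lemma'' (Proposition \ref{thm 41}) we just established.

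First, introduce a fresh polynomial variable $X$ (distinct from the graded-evaluation variable used to define $\alpha^+(-)$) and set
\[
\gamma(X) := \alpha^+(X + d)\,\alpha^+(d)^{-1} \in {\G}(Q_1[X]),
\]
where $Q_1[X] = Q_1 \otimes_A A[X]$. Evaluating at $X = 0$ gives $\gamma(0) = \alpha^+(d)\alpha^+(d)^{-1} = {\I}_n$, which is the base-point hypothesis needed by the Dilation Lemma. Since $\alpha_s \in {\E}(n, A_s)$, and the homomorphism $\epsilon$ sends each elementary generator $ge_{ij}(c)$ to the elementary generator $ge_{ij}(c^+(X))$, it follows that $\alpha_s^+(X) \in {\E}(n, A_s[X])$, and therefore both $\alpha_s^+(X + d)$ and $\alpha_s^+(d)$ lie in ${\E}(n, A_s[X])$. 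Consequently $\gamma_s(X) \in {\E}(n, A_s[X])$, so all hypotheses of the Dilation Lemma are verified for $\gamma(X)$ over the ring $A[X]$.

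Now applying Proposition \ref{thm 41} (equivalently the polynomial-ring Dilation Lemma \cite{BBR}, Proposition 3.1) to $\gamma(X)$ produces $\widetilde{\gamma}(X) \in {\ET}(Q_1[X])$ together with an exponent $l \gg 0$ such that $\widetilde{\gamma}_s(X) = \gamma_s(bX)$ for $b = s^l$. Substituting $X = 1$ gives $\widetilde{\gamma}(1) \in {\ET}(Q_1)$ with
\[
\widetilde{\gamma}(1)_s \;=\; \gamma_s(b) \;=\; \alpha_s^+(b + d)\,\alpha_s^+(d)^{-1}.
\]
Choosing $l$ large enough so that $b = s^l$ clears every denominator appearing in $\alpha^+(b+d)\alpha^+(d)^{-1}$, Lemma \ref{lemm 13} lets us identify $\widetilde{\gamma}(1)$ with the natural pullback of this element to ${\G}(Q_1)$. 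Hence $\alpha^+(b + d)\alpha^+(d)^{-1} \in {\ET}(Q_1)$, as required.

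The main obstacle is purely bookkeeping: keeping clean the distinction between the graded variable implicit in $\alpha^+(-)$ and the polynomial variable $X$ introduced above, and ensuring that the polynomial-ring Dilation Lemma receives $\gamma(X)$ (with $\gamma(0) = {\I}_n$) as input rather than $\alpha$ itself. Once this is done, the injectivity statement of Lemma \ref{lemm 13} handles the final descent from the localization back to ${\G}(Q_1)$.
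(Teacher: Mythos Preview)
Your proof is correct and follows essentially the same approach as the paper: form the shifted element $\alpha^+(X+d)\alpha^+(d)^{-1}$, observe that its localization lies in the elementary group, and invoke the Dilation Lemma. The paper's own argument is terser (it works directly with $\beta = \alpha_s^+(b+d)\alpha_s^+(d)^{-1}$ and appeals to Proposition~\ref{thm 41}), while you make the passage through $A[X]$ and the polynomial Dilation Lemma of \cite{BBR} explicit, exactly paralleling Lemma~\ref{lemm 12}; both routes are the same in substance.
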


\begin{proof}
Let $\beta = \alpha_s^+(b + d){\alpha_s^+(d)}^{-1}$. Since $\alpha_s \in {\E}(n, A_s)$ 
implies $\alpha^+(a) \in {\E}(n, A_s)$ for $a \in A_s$ and hence $\beta \in {\E}(n, A_s)$. 
Hence by Theorem \ref{thm 41}, their exists a $\widetilde{\beta} \in {\ET}(Q_1)$ such that $\widetilde{\beta}_s^+(b) = \beta$. Hence the lemma follows. 
\end{proof}

\begin{tr}[Local-Global Principle]\label{thm 51} Let $Q$ be a projective $A_0$-module, and $Q_1 = Q \otimes_{A_0} A$. 
Suppose $\alpha \in G(Q_1)$ with $\alpha^+(0) = {\I}_n$. If 

\[ \alpha_\mathfrak{m} \in
\begin{cases}
{\E}\big(r + 1, (A_0)_\mathfrak{m}\big) & \text{ in the linear case, }\\
{\E}\big(2r + 2, (A_0)_\mathfrak{m}\big) & \text{ otherwise. }
\end{cases}
\]
for all $\mathfrak{m} \in $ ${\rm max}\; (A_0)$, then $\alpha \in {\rm ET \big(Q_1\big)}\subseteq {\rm T \big(Q_1\big)}$.
\end{tr}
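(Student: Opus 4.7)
The plan is to mimic the proof of Theorem \ref{thm 22}, step (\ref{Prop 13}) $\Rightarrow$ (\ref{Prop 12}), replacing the ``Dilation Lemma'' there with its transvection-theoretic analogue (Proposition \ref{thm 41}) and using the auxiliary lemma proved just above the theorem.

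First, I would translate the local hypothesis into a Zariski-cover statement. Fix $\mathfrak{m} \in \max(A_0)$. Since $\alpha_\mathfrak{m} \in \widetilde{\mathrm{E}}(r, (A_0)_\mathfrak{m})$, standard clearing of denominators produces an element $s = s(\mathfrak{m}) \in A_0 \setminus \mathfrak{m}$ (taken to be a non-zero divisor; one can always multiply by such to achieve this, using Noetherianness of $A_0$) such that $\alpha_s \in \widetilde{\mathrm{E}}(r,(A_0)_s)$. As $\mathfrak{m}$ varies, the ideals generated by such $s$ cover $A_0$, so by quasi-compactness of $\mathrm{Spec}(A_0)$ we get finitely many $s_1,\ldots,s_r \in A_0$, each a non-zero divisor, with $\langle s_1,\ldots,s_r\rangle = A_0$ and $\alpha_{s_i} \in \widetilde{\mathrm{E}}(r,(A_0)_{s_i})$ for every $i$.

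Second, I would apply the graded Dilation Lemma (Proposition \ref{thm 41}) to each index $i$: this yields a positive integer $l_i$ and an element $\widetilde{\alpha}_i \in \mathrm{ET}(Q_1)$ whose image in $\mathrm{G}((Q_1)_{s_i})$ coincides with $\alpha_{s_i}^{+}(b_i)$, where $b_i = s_i^{l_i}$. Choosing the $l_i$ large enough (using a partition of unity for $A_0 = \langle s_1,\ldots,s_r\rangle$), we may arrange $b_1 + b_2 + \cdots + b_r = 1$. More generally, using the lemma immediately preceding the theorem, each expression $\alpha^{+}(b_i + d)\alpha^{+}(d)^{-1}$ (with $d \in A_0$) lies in $\mathrm{ET}(Q_1)$ once $b_i$ is a sufficiently high power of $s_i$.

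Third, I would perform the telescoping identity exactly as in the implication (\ref{Prop 13})$\Rightarrow$(\ref{Prop 12}) of Theorem \ref{thm 22}. Working inside the total localization $A_0 \to (A_0)_{s_1\cdots s_r}$ (which is injective on the relevant principal congruence subgroup by Lemma \ref{lemm 13}, applied to $Q_1 \otimes (A_0)_{s_1\cdots s_r}$), we write
\begin{align*}
\alpha \; &=\; \alpha^{+}(1) \;=\; \alpha^{+}(b_1 + b_2 + \cdots + b_r)\\
&= \prod_{i=1}^{r}\bigl(\alpha^{+}(b_i + b_{i+1} + \cdots + b_r)\,\alpha^{+}(b_{i+1} + \cdots + b_r)^{-1}\bigr),
\end{align*}
where each factor in the product, after being localized at $s_1\cdots \widehat{s_i}\cdots s_r$, can be recognized (via the preceding lemma applied to $\alpha_{s_i}$) as coming from an element of $\mathrm{ET}(Q_1)$. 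By Lemma \ref{lemm 13}, the products match with their genuine pullbacks to $A$, and hence $\alpha \in \mathrm{ET}(Q_1)$. The inclusion $\mathrm{ET}(Q_1) \subseteq \mathrm{T}(Q_1)$ is immediate from the definitions.

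The main obstacle I anticipate is bookkeeping the pullback step: one must check that a product of elements which individually come from $\mathrm{ET}(Q_1)$ but are compared only after localization at different $s_i$ still assembles to a well-defined automorphism of $Q_1$. This is precisely where Lemma \ref{lemm 13} (the Noetherian injectivity lemma) and the commutative square highlighted in Remark \ref{rmk 001} enter: they ensure that the ambiguity in pullback lies in a group where the localization map is injective, so the telescoping identity on the localized side forces the corresponding identity back on $Q_1$. Every other ingredient (cover of $\mathrm{Spec}(A_0)$, Dilation Lemma, the $\alpha^{+}(b+d)\alpha^{+}(d)^{-1}$ lemma) is already in place in the excerpt.
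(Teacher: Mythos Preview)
Your proposal is correct and follows essentially the same approach as the paper: pass from the local hypothesis to a finite cover $s_1,\ldots,s_r$ of $\mathrm{Spec}(A_0)$, apply the graded Dilation Lemma (Proposition~\ref{thm 41}) and the auxiliary lemma to obtain $\alpha^{+}(b_i+\cdots+b_r)\alpha^{+}(b_{i+1}+\cdots+b_r)^{-1}\in\mathrm{ET}(Q_1)$, and conclude via the telescoping product exactly as in the implication (\ref{Prop 13})$\Rightarrow$(\ref{Prop 12}) of Theorem~\ref{thm 22}, invoking Lemma~\ref{lemm 13} and Remark~\ref{rmk 001} for the pullback. The paper's own proof is the same argument, presented more tersely.
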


\begin{proof} Let $\alpha \in {\rm G}(Q_1)$ with $\alpha^+(0) = {\I}_n$. Since $\alpha_\mathfrak{m} \in \widetilde{{\E}}(r, A_\mathfrak{m})$. 
Hence their exists a non-nilpotent $s \in A_0 - \mathfrak{m}$, such that $\alpha_s \in \widetilde{{\E}}(r, A_s)$. Hence by the above ``Dilation Lemma'' 
there exists $\widetilde{\alpha} \in {\rm ET}(Q_1)$ such that $\widetilde{\alpha}_s = \alpha_s^+(b)$, for some $b = s^l$ with $l \gg 0$. 
For each maximal ideal $\mathfrak{m}_i$ we can find a suitable $b_i$. Since $A$ is Noetherian, it follows that $b_1 + \cdots + b_r = 1$ for some positive integer $r$. 
Now we observe that $\alpha^+(b_i + \cdots + b_r){\alpha^+(b_{i+1} + \cdots + b_r)}^{-1} \in {\rm ET}(Q_1)$ and hence  
calculating in the similar manner as we did it in Remark \ref{rmk 001}, we get 

\begin{align*}
\alpha  = \alpha^+(1) = \;& \alpha^+(b_1 + \cdots + b_r) {\alpha^+(b_2 + \cdots + b_r)}^{-1} \cdots \alpha^+
(b_i + \cdots + b_r) \\ & {\alpha^+(b_{i+1} + \cdots + b_r)}^{-1} \cdots \alpha^+(0)^{-1}\in {\rm ET}(Q_1)\subseteq {\rm T}(Q_1); 
\end{align*}
as desired.
\end{proof}

\section*{Acknowledgement}

The second author expresses his sincere thanks to IISER Pune for allowing him to work on this project and 
providing him their infrastructural facilities.

\medskip

\noindent
{\it Indian Institute of Science Education and Research $($IISER Pune$)$,  India},\\ 
{\it email: rabeya.basu@gmail.com, rbasu@iiserpune.ac.in}\vp \newline
{\it Indian Institute of Science Education and Research $($IISER Pune$)$,  India},\\
{\it email: manishks10100@gmail.com}\vp 
\medskip


\begin{thebibliography}{99} 

\bibitem{BN} {\scshape Bak, Anthony, and Nikolai Vavilov}; "Structure of hyperbolic unitary groups I: 
elementary subgroups." In Algebra Colloquium, vol. 7, no. 2, pp. 159-196. Springer-Verlag, 2000


\bibitem{BA} {\scshape H. Bass}; Algebraic $K$-Theory. W . A. Benjamin,
Inc., New York, Amsterdam (1968).

\bibitem{rrr} {\scshape Rabeya. Basu, Ravi A. Rao, Reema Khanna}; On
Quillen's Local-Global Principle,  Commutative algebra and algebraic geometry,
17--30, Contemp. Math., 390, Amer. Math. Soc., Providence, RI, 2005.  

\bibitem{RB1} {\scshape Rabeya Basu}; Results in Classical Algebraic
${K}$-theory,  Ph.D. thesis, Tata Institute of Fundamental Research
(2007), pp 70.

\bibitem{BBR} {\scshape A. Bak, R. Basu, R.A. Rao}; 
Local-Global Principle for transvection groups. Proceedings of The American Mathematical Society 138 (2010), no. 4, 
1191--1204.

\bibitem{RB2} {\scshape Rabeya Basu, Ravi A. Rao, Reema Khanna}; On the equivalence of the relative 
Quillen-Suslin Local-Global Principle and normality of the relative elementary subgroup, (to appear) 
Proceeding of the conference Leavitt-path algebras and $K$-theory (CUSAT), Indian Statistical Institute series, Springer.

\bibitem{C} {\scshape Chouinard L.G.}; L.G., Projective modules over Krull semigroups, Michigan Math.J.29, 143-148, 1982.

\bibitem{gubel} {\scshape J. Gubeladze}; Classical algebraic K-theory  
of monoid algebras, K-theory and homological algebra, Lecture Notes in Mathematics 137, 
1990, 36--94. 

\bibitem{gubel2} {\scshape J. Gubeladze}; Anderson's conjecture and the maximal monoid class over which projective modules are free, Math. USSR-Sb. 63 (1998), 165--180.

\bibitem{HV} {\scshape  Roozbeh Hazrat, Nikolai Vavilov}; ${K_1}$ of
Chevalley groups are nilpotent,  Journal of Pure and Applied Algebra
179  (2003), no. 1-2, 99--116.



%



\bibitem{MM}{\scshape H. Matsumura}; Commutative ring theory. Translated from the Japanese by M. Reid. Second edition. 
Cambridge Studies in Advanced Mathematics, 8. Cambridge University Press, Cambridge, 1989.
%
%

\bibitem{QUI} {\scshape  D. Quillen}; Projective Modules over
Polynomial Rings, Invent. Math. 36 (1976), 167--171.


%
\bibitem{SUS} {\scshape A.A. Suslin}; On the structure of special
linear group over polynomial rings, Math. USSR. Izv. 11 (1977),
221--238.
%
%

\bibitem{SUSK}  {\scshape A.A. Suslin, V.I. Kopeiko}; Quadratic
modules and Orthogonal  groups over polynomial rings, Nauchn. Sem.,
LOMI 71 (1978), 216--250.

\bibitem{Tu} {\scshape M.S. Tulenbaev}; Schur multiplier of a group of elementary matrices of finite order,
Zapiski Nauchnykh Seminarov Leningradskogo Otdeleniya Matematicheskogo Instuta im V.A. Steklova Akad. Nauk 
SSSR, Vol. 86 (1979), 162--169.

\bibitem{vas2} {\scshape L.N. Vaserstein}; On the normal subgroups of
$GL_n, $ over a ring, Algebraic $K$-Theory, Evanston (1980) (Proc. Conf.,
Northwestern Univ.,  Evanston Ill., (1980)), pp. 456--465, Lecture
Notes in Math., 854, Springer, Berlin-New York, (1981).



\end{thebibliography}
\end{document}